\newtheorem{prop}{Proposition}[section]
\newtheorem{lemma}[prop]{Lemma}
\newtheorem{rem}[prop]{Remark}
\newtheorem{remark}[prop]{Remark}
\newtheorem{thm}[prop]{Theorem}
\newtheorem{theorem}[prop]{Theorem}
\newtheorem{definition}[prop]{Definition}
\renewcommand{\geq}{\geqslant}
\def\leq{\leqslant}
\newcommand{\R}{\mathbb{R}}
\def\HH{\EuFrak H}
\def\1{{\mathbf{1}}}
\def\1{{\mathbf{1}}}
\def\0.5{{\frac{1}{2}}}
\newcommand{\fin}
{ \vspace{-0.6cm}
\begin{flushright}
\mbox{$\Box$}
\end{flushright}
\noindent }
\newenvironment{proof}[1]{\begin{trivlist}\item {\it
\bf Proof.}\quad} {\qed\end{trivlist}}
\newcommand{\qed}{\nopagebreak\hspace*{\fill}
{\vrule width6pt height6ptdepth0pt}\par}
\begin{document}

\begin{center}
{\LARGE{\bf Cumulants on the Wiener Space}}\\~\\
by Ivan Nourdin\footnote{Laboratoire de Probabilit{\'e}s et
Mod{\`e}les Al{\'e}atoires, Universit{\'e} Paris VI,
Bo{\^\i}te courrier 188, 4 Place Jussieu, 75252 Paris Cedex 5,
France. Email: {\tt ivan.nourdin@upmc.fr}} and Giovanni
Peccati\footnote{Equipe Modal'X, Universit\'{e} Paris Ouest -- Nanterre la D\'{e}fense, 200 Avenue de la République, 92000 Nanterre, and LSTA, Universit\'{e} Paris VI, France. Email: \texttt{giovanni.peccati@gmail.com}} \\
{\it Universit\'e Paris VI and Universit\'e Paris Ouest}\\~\\
\end{center}
{\small \noindent {\bf Abstract:} We combine infinite-dimensional integration by parts procedures with a recursive relation on moments (reminiscent of a formula by Barbour (1986)), and deduce explicit expressions for cumulants of functionals of a general Gaussian field. These findings yield a compact formula for cumulants on a fixed Wiener chaos, virtually replacing the usual ``graph/diagram computations'' adopted in most of the probabilistic literature.\\

\noindent {\bf Key words:} Cumulants; Diagram Formulae; Gaussian Processes; Malliavin calculus;
Ornstein-Uhlenbeck Semigroup.\\

\noindent
{\bf 2000 Mathematics Subject Classification:} 60F05; 60G15; 60H05; 60H07. }
\\

\section{Introduction}
The {\sl integration by parts formula} of Malliavin calculus, combining derivative operators and anticipative integrals into a flexible tool for computing and assessing mathematical expectations, is a cornerstone of modern stochastic analysis. The scope of its applications, ranging e.g. from density estimates for solutions of stochastic differential equations to concentration inequalities, from anticipative stochastic calculus to ``Greeks'' computations in mathematical finance, is vividly described in the three classic monographs by Malliavin \cite{MallBook}, Janson \cite{Janson} and Nualart \cite{nualartbook}.

In recent years, infinite-dimensional integration by parts techniques have found another fertile ground for applications, that is, {\sl limit theorems} and (more generally) {\sl probabilistic approximations}. The starting point of this active line of research is the paper \cite{NO}, where the authors use Malliavin calculus in order to refine some criteria for asymptotic normality on a fixed Wiener chaos, originally proved in \cite{nunugio, PTu04} (see also \cite{noncentral} for some non-central version of these results). Another important step appears in \cite{NP-PTRF}, where integration by parts on Wiener space is combined with the so-called {\sl Stein's method} for probabilistic approximations (see e.g. \cite{chen-shao, Reinert_sur}), thus yielding explicit upper bounds in the normal and gamma approximations of the law of functionals of Gaussian fields. The techniques introduced in \cite{NP-PTRF} have led to several applications and generalizations, for instance: in \cite{exact} one can find applications to Edgeworth expansions and reversed Berry-Esseen inequalities; \cite{multivariate} contains results for multivariate normal approximations; \cite{AirMalVie} focuses on further developments in the multivariate case, in relation with quasi-sure analysis; \cite{Noupecrei2} deals with infinite-dimensional second order Poincar\'{e} inequalities; in \cite{NV}, one can find new explicit expressions for the density of functionals of Gaussian field as well as new concentration inequalities (see also \cite{breton-nourdin-peccati} for some applications in mathematical statistics); in \cite{Noupecrei3}, the results of \cite{NP-PTRF} are combined with Lindeberg-type invariance principles in order to deduce {\sl universality results} for homogeneous sums (these findings are further applied in \cite{NouPeMATRIX} to random matrix theory).

The aim of this note is to develop yet another striking application of the infinite-dimensional integration by parts formula of Malliavin calculus, namely the computation of {\sl cumulants} for general functionals of a given Gaussian field. As discussed below, our techniques make a crucial use of a recursive formula for moments (see relation (\ref{EQ : RecMom}) below), which is the starting point of some well-known computations performed by Barbour in \cite{BarbourPtrf1986} in connection with the Stein's method for normal approximations. As such, the techniques developed in the forthcoming sections can be seen as further ramifications of the findings of \cite{NP-PTRF}.

The main achievement of the present work is a recursive formula for cumulants (see (\ref{belleformule})), based on a repeated use of integration by parts. Note that cumulants of order greater than two {\sl are not linear operators} (for instance, the second cumulant coincides with the variance): however, our formula (\ref{belleformule}) implies that cumulants of regular functionals of Gaussian fields can be always represented as the mathematical expectation of some recursively defined random variable. We shall prove in Section \ref{S : CumCHaos} that this implies a new compact representation for cumulants of random variables belonging to a fixed Wiener chaos. We claim that this result may replace the classic ``diagram computations'' adopted in most of the probabilistic literature (see e.g. \cite{BrMa, ChaSlud, GiSu}, as well as \cite{PecTaq_SURV} for a general discussion of related combinatorial results).

\smallskip

The paper is organized as follows. In Section \ref{S : MomExp} we state and prove some useful recursive formulae for moments. Section \ref{S : Malliavin} contains basic concepts and results related to Malliavin calculus. Section \ref{S : RecursiveCum} is devoted to our main statements about cumulants on Wiener space. Finally, in Section \ref{S : CumCHaos} we specialize our results to random variables contained in a fixed Wiener chaos.

\smallskip

From now on, every random object is defined on a common suitable probability space $(\Omega, \mathcal{F}, P)$.

\bigskip

\noindent{\bf Acknowledgements.} Part of this paper has been written while we were visiting the Department of Statistics of Purdue University, in the occasion of the workshop ``Stochastic Analysis at Purdue'', from September 28 to October 1, 2009. We heartily thank Professor Frederi G. Viens for his warm hospitality and generous support.

\section{Moment expansions}\label{S : MomExp}
The starting point of our analysis (Proposition \ref{P : recmom}) is a well-known recursive relation involving moments and cumulants. As already discussed, this result is the seed (as well as a special case) of some remarkable formulae by A.D. Barbour \cite[Lemma 1 and Corollary 1]{BarbourPtrf1986}, providing Edgeworth-type expansions for smooth functions of random variables with finite moments. Since we only need Barbour's results in the special case of polynomial transformations, and for the sake of completeness, we choose to provide a self-contained presentation in this simpler setting. See also Rotar' \cite{RotarOnBar} for further extensions of Barbour's findings.

\begin{definition}[Cumulants]\label{D : cum}{\rm Let $X$ be a real-valued random variable such that $E|X|^m<\infty$ for some integer $m\geq 1$, and define $\phi_X(t) = E(e^{itX})$, $t\in\R$, to be the characteristic function of $X$.
Then, for $j=1,...,m$, the $j$th {\sl cumulant} of $X$, denoted by $\kappa_j(X)$, is given by
\begin{equation}
\kappa_j (X) = (-i)^j \frac{d^j}{d t^j} \log \phi_X (t)|_{t=0}.
\end{equation}
For instance, $\kappa_1(X) = E(X)$, $\kappa_2(X) = E(X^2)-E(X)^2 = {\rm Var}(X)$, $\kappa_3(X) = E(X^3)-3E(X^2)E(X)+2E(X)^3$, and so on.
}
\end{definition}

\smallskip

The following relation is exploited throughout the paper.

\begin{prop}\label{P : recmom}
Fix $m= 0,1,2...$, and suppose that $E|X|^{m+1}<\infty$. Then
\begin{equation}\label{EQ : RecMom}
E(X^{m+1}) = \sum_{s=0}^m \binom{m}{s}\kappa_{s+1}(X) E(X^{m-s}).
\end{equation}
\end{prop}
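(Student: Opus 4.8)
The plan is to work directly with the exponential generating functions for moments and cumulants, differentiate once, and match coefficients; a product (Leibniz / Cauchy) expansion then produces exactly the binomial sum in (\ref{EQ : RecMom}). Concretely, set $M(t) = \phi_X(t) = E(e^{itX})$ and $K(t) = \log \phi_X(t)$, both viewed as formal power series in $t$ up to order $m+1$ (this is legitimate since $E|X|^{m+1}<\infty$, so $\phi_X$ is $(m+1)$-times differentiable at $0$). By Definition \ref{D : cum}, the coefficients of $K$ are governed by the cumulants $\kappa_j(X)$, while the coefficients of $M$ are governed by the moments $E(X^j)$, up to the usual powers of $i$.

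First I would write $M'(t) = K'(t)\,M(t)$, the immediate consequence of $M = e^{K}$. Next I would expand each of the three series to order $m$ (the identity $M' = K' M$ at order $m$ in $t$ carries the information about $E(X^{m+1})$): writing $M(t) = \sum_{j\ge 0} E(X^j)\frac{(it)^j}{j!}$ and $K'(t) = \sum_{s\ge 0}\kappa_{s+1}(X)\frac{(it)^s}{s!}\cdot i$ (so that the $s$-th derivative of $K$ at $0$ is $i^s\kappa_{s}$ for $s\ge1$, matching the normalization $(-i)^j$ in Definition \ref{D : cum}), I would read off the coefficient of $t^m$ on both sides. On the left the coefficient of $t^m$ in $M'(t)$ is $E(X^{m+1})\,i^{m+1}/m!$; on the right the Cauchy product gives $\sum_{s=0}^m \big(\kappa_{s+1}(X) i^{s+1}/s!\big)\big(E(X^{m-s}) i^{m-s}/(m-s)!\big)$. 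Multiplying through by $m!\,i^{-(m+1)}$ and using $\binom{m}{s} = m!/(s!(m-s)!)$, all powers of $i$ cancel (since $i^{s+1}i^{m-s} = i^{m+1}$) and one obtains precisely (\ref{EQ : RecMom}).

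The main thing to be careful about — rather than a genuine obstacle — is the bookkeeping of the factors of $i$ and factorials, so that the normalization of cumulants in Definition \ref{D : cum} is threaded through correctly; a small sanity check against the low-order cases ($m=0$ giving $E(X) = \kappa_1(X)$, $m=1$ giving $E(X^2) = \kappa_2(X) + \kappa_1(X)E(X)$) confirms the constants. One should also note at the outset that differentiating $\log\phi_X$ is valid near $t=0$ because $\phi_X(0)=1$ and $\phi_X$ is continuous, so $\phi_X$ stays away from $0$ on a neighbourhood of the origin; this justifies treating $K$ and its derivatives as honest functions (equivalently, formal power series) up to the required order. An alternative route, which avoids complex characteristic functions altogether when $X$ has all moments, is to use the formal identity $\exp\big(\sum_{j\ge1}\kappa_j t^j/j!\big) = \sum_{n\ge0} E(X^n) t^n/n!$ at the level of formal power series and differentiate that; the combinatorics is identical, and one may prefer it for a fully self-contained exposition, but the characteristic-function argument above is the most direct given the statement as phrased.
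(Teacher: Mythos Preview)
Your proposal is correct and essentially the same as the paper's proof: both rest on the identity $\phi_X'=(\log\phi_X)'\,\phi_X$ and then extract the order-$m$ information, the paper by applying the Leibniz rule to $\frac{d^m}{dt^m}\big[(\log\phi_X)'\,\phi_X\big]\big|_{t=0}$, you by reading off the coefficient of $t^m$ in the Cauchy product of the corresponding Taylor expansions. These are two phrasings of the same computation, and your bookkeeping of the powers of $i$ and factorials is accurate.
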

\begin{proof}
{} By Leibniz rule, one has that
\begin{eqnarray*}
E(X^{m+1}) &=& (-i)^{m+1} \frac{d^{m+1}}{dt^{m+1}}\phi_X(t)|_{t=0} \\
    &=& (-i)^{m+1} \left.\frac{d^{m}}{dt^{m}}\left[\left( \frac{d}{dt}\log\phi_X(t)\right)\phi_X(t)\right] \right|_{t=0} \\
    &=& \left.\left[ \sum_{s=0}^m (-i)^{s+1}\binom{m}{s}\frac{d^{s+1}}{dt^{s+1}}(\log\phi_X(t)) \times (-i)^{m-s}\frac{d^{m-s} }{dt^{m-s}}\phi_X(t)\right] \right|_{t=0},
\end{eqnarray*}
with $\frac{d^0}{dt^0}$ equal to the identity operator. This yields the desired conclusion.
\end{proof}

\smallskip

Finally, observe that (\ref{EQ : RecMom}) can be rewritten as
\[
E(X^{m+1}) = \sum_{s=0}^m \frac{\kappa_{s+1}(X)}{s!} m(m-1)\cdot\cdot\cdot(m-s+1) E(X^{m-s}),
\]
implying (by linearity) that, for $X$ as in Proposition \ref{P : recmom} and for every polynomial $f : \R\rightarrow \R$ of degree at most $m\geq 1$,
\[
E(X f(X)) = \sum_{s=0}^m \frac{\kappa_{s+1}(X)}{s!} E\left( \frac{d^s}{dx^s}f(X) \right) = \sum_{s=0}^\infty \frac{\kappa_{s+1}(X)}{s!} E\left( \frac{d^s}{dx^s}f(X) \right).
\]

\begin{rem}{\rm
In \cite[Corollary 1]{BarbourPtrf1986}, one can find sufficient conditions ensuring that the infinite expansion
\[
E(Xf(X)) = \sum_{s=0}^\infty \frac{\kappa_{s+1}(X)}{s!} E\left( \frac{d^s}{dx^s}f(X) \right).
\]
holds for some infinitely differentiable function $f$ which is not necessarily a polynomial.}
\end{rem}

\section{Malliavin operators and Gaussian analysis}\label{S : Malliavin}
We shall now present the basic elements of Gaussian analysis and Malliavin calculus that are used in this paper. The reader
is referred to \cite{Janson, MallBook, nualartbook} for any unexplained definition or
 result.

\smallskip

Let $\EuFrak H$ be a real separable Hilbert space. For any $q\geq 1$, let $\EuFrak H^{\otimes q}$ be the $q$th tensor power of $\EuFrak H$ and denote
by $\EuFrak H^{\odot q}$ the associated $q$th symmetric tensor power. We write $X=\{X(h),h\in \EuFrak H\}$ to indicate
an {\sl isonormal Gaussian process} over
$\EuFrak H$, defined on some probability space $(\Omega ,\mathcal{F},P)$.
This means that $X$ is a centered Gaussian family, whose covariance is given by the relation
$E\left[ X(h)X(g)\right] =\langle h,g\rangle _{\EuFrak H}$. We also assume that $\mathcal{F}=\sigma(X)$, that is,
$\mathcal{F}$ is generated by $X$.

For every $q\geq 1$, let $\mathcal{H}_{q}$ be the $q$th Wiener chaos of $X$,
defined as the closed linear subspace of $L^2(\Omega ,\mathcal{F},P)$
generated by the family $\{H_{q}(X(h)),h\in \EuFrak H,\left\|
h\right\| _{\EuFrak H}=1\}$, where $H_{q}$ is the $q$th Hermite polynomial
given by \[
H_q(x) = (-1)^q e^{\frac{x^2}{2}}
 \frac{d^q}{dx^q} \big( e^{-\frac{x^2}{2}} \big).
\]
We write by convention $\mathcal{H}_{0} = \mathbb{R}$. For
any $q\geq 1$, the mapping $I_{q}(h^{\otimes q})=q!H_{q}(X(h))$ can be extended to a
linear isometry between the symmetric tensor product $\EuFrak H^{\odot q}$
(equipped with the modified norm $\sqrt{q!}\left\| \cdot \right\| _{\EuFrak H^{\otimes q}}$)
and the $q$th Wiener chaos $\mathcal{H}_{q}$. For $q=0$, we write $I_{0}(c)=c$, $c\in\mathbb{R}$. It is
well-known that $L^2(\Omega):=L^2(\Omega ,\mathcal{F},P)$
can be decomposed into the infinite orthogonal sum of the spaces $\mathcal{H}_{q}$. It follows that any square integrable random variable
$F\in L^2(\Omega)$ admits the following (Wiener-It\^{o}) chaotic expansion
\begin{equation}
F=\sum_{q=0}^{\infty }I_{q}(f_{q}),  \label{E}
\end{equation}
where $f_{0}=E[F]$, and the $f_{q}\in \EuFrak H^{\odot q}$, $q\geq 1$, are
uniquely determined by $F$. For every $q\geq 0$, we denote by $J_{q}$ the
orthogonal projection operator on the $q$th Wiener chaos. In particular, if
$F\in L^2(\Omega)$ is as in (\ref{E}), then
$J_{q}F=I_{q}(f_{q})$ for every $q\geq 0$.

Let $\{e_{k},\,k\geq 1\}$ be a complete orthonormal system in $\EuFrak H$.
Given $f\in \EuFrak H^{\odot p}$ and $g\in \EuFrak H^{\odot q}$, for every
$r=0,\ldots ,p\wedge q$, the \textit{contraction} of $f$ and $g$ of order $r$
is the element of $\EuFrak H^{\otimes (p+q-2r)}$ defined by
\begin{equation}
f\otimes _{r}g=\sum_{i_{1},\ldots ,i_{r}=1}^{\infty }\langle
f,e_{i_{1}}\otimes \ldots \otimes e_{i_{r}}\rangle _{\EuFrak H^{\otimes
r}}\otimes \langle g,e_{i_{1}}\otimes \ldots \otimes e_{i_{r}}
\rangle_{\EuFrak H^{\otimes r}}.  \label{v2}
\end{equation}
Notice that the definition of $f\otimes_r g$ does not depend
on the particular choice of $\{e_k,\,k\geq 1\}$, and that
$f\otimes _{r}g$ is not necessarily symmetric; we denote its
symmetrization by $f\widetilde{\otimes }_{r}g\in \EuFrak H^{\odot (p+q-2r)}$.
Moreover, $f\otimes _{0}g=f\otimes g$ equals the tensor product of $f$ and
$g$ while, for $p=q$, $f\otimes _{q}g=\langle f,g\rangle _{\EuFrak H^{\otimes q}}$.

It can also be shown that the following {\sl multiplication formula} holds: if $f\in \EuFrak
H^{\odot p}$ and $g\in \EuFrak
H^{\odot q}$, then
\begin{eqnarray}\label{multiplication}
I_p(f) I_q(g) = \sum_{r=0}^{p \wedge q} r! {p \choose r}{ q \choose r} I_{p+q-2r} (f\widetilde{\otimes}_{r}g).
\end{eqnarray}
\smallskip

We now introduce some basic elements of the Malliavin calculus with respect
to the isonormal Gaussian process $X$. Let $\mathcal{S}$
be the set of all
cylindrical random variables of
the form
\begin{equation}
F=g\left( X(\phi _{1}),\ldots ,X(\phi _{n})\right) ,  \label{v3}
\end{equation}
where $n\geq 1$, $g:\mathbb{R}^{n}\rightarrow \mathbb{R}$ is an infinitely
differentiable function such that its partial derivatives have polynomial growth, and $\phi _{i}\in \EuFrak H$,
$i=1,\ldots,n$.
The {\sl Malliavin derivative}  of $F$ with respect to $X$ is the element of
$L^2(\Omega ,\EuFrak H)$ defined as
\begin{equation*}
DF\;=\;\sum_{i=1}^{n}\frac{\partial g}{\partial x_{i}}\left( X(\phi
_{1}),\ldots ,X(\phi _{n})\right) \phi _{i}.
\end{equation*}
In particular, $DX(h)=h$ for every $h\in \EuFrak H$. By iteration, one can
define the $m$th derivative $D^{m}F$, which is an element of $L^2(\Omega ,\EuFrak H^{\odot m})$,
for every $m\geq 2$.
For $m\geq 1$ and $p\geq 1$, ${\mathbb{D}}^{m,p}$ denotes the closure of
$\mathcal{S}$ with respect to the norm $\Vert \cdot \Vert _{m,p}$, defined by
the relation
\begin{equation*}
\Vert F\Vert _{m,p}^{p}\;=\;E\left[ |F|^{p}\right] +\sum_{i=1}^{m}E\left(
\Vert D^{i}F\Vert _{\EuFrak H^{\otimes i}}^{p}\right) .
\end{equation*}
One also writes $\mathbb{D}^{\infty} = \bigcap_{m\geq 1}
\bigcap_{p\geq 1}\mathbb{D}^{m,p}$. The Malliavin derivative $D$ obeys the following \textsl{chain rule}. If
$\varphi :\mathbb{R}^{n}\rightarrow \mathbb{R}$ is continuously
differentiable with bounded partial derivatives and if $F=(F_{1},\ldots
,F_{n})$ is a vector of elements of ${\mathbb{D}}^{1,2}$, then $\varphi
(F)\in {\mathbb{D}}^{1,2}$ and
\begin{equation*}
D\,\varphi (F)=\sum_{i=1}^{n}\frac{\partial \varphi }{\partial x_{i}}(F)DF_{i}.
\end{equation*}
Note also that a random variable $F$ as in (\ref{E}) is in ${\mathbb{D}}^{1,2}$ if and only if
$\sum_{q=1}^{\infty }q\|J_qF\|^2_{L^2(\Omega)}<\infty$
and, in this case, $E\left( \Vert DF\Vert _{\EuFrak H}^{2}\right)
=\sum_{q=1}^{\infty }q\|J_qF\|^2_{L^2(\Omega)}$. If $\EuFrak H=
L^{2}(\mathbb{A},\mathscr{A},\mu )$ (with $\mu $ non-atomic), then the
derivative of a random variable $F$ as in (\ref{E}) can be identified with
the element of $L^2(\mathbb{A}\times \Omega )$ given by
\begin{equation}
D_{x}F=\sum_{q=1}^{\infty }qI_{q-1}\left( f_{q}(\cdot ,x)\right) ,\quad x\in
\mathbb{A}.  \label{dtf}
\end{equation}

We denote by $\delta $ the adjoint of the operator $D$, also called the
\textsl{divergence operator}. A random element $u\in L^2(\Omega ,\EuFrak H)$
belongs to the domain of $\delta $, noted $\mathrm{Dom}\delta $, if and
only if it verifies
$|E\langle DF,u\rangle _{\EuFrak H}|\leq c_{u}\,\Vert F\Vert _{L^2(\Omega)}$
for any $F\in \mathbb{D}^{1,2}$, where $c_{u}$ is a constant depending only
on $u$. If $u\in \mathrm{Dom}\delta $, then the random variable $\delta (u)$
is defined by the duality relationship (called \textsl{integration by parts
formula})
\begin{equation}
E(F\delta (u))=E\langle DF,u\rangle _{\EuFrak H},  \label{ipp}
\end{equation}
which holds for every $F\in {\mathbb{D}}^{1,2}$.

The family $(P_t,\,t\geq 0)$ of operators is defined through the projection operators $J_q$
as \begin{equation}\label{OUsemigroup}
P_t=\sum_{q=0}^\infty e^{-qt}J_q,
\end{equation}
and is called the {\sl Ornstein-Uhlenbeck semigroup}. Assume that
the process $X'$, which stands for an independent copy of $X$, is such that $X$ and $X'$ are defined
on the product probability space $(\Omega\times\Omega',\mathscr{F}\otimes\mathscr{F}',P\times P')$.
Given a random variable $F\in\mathbb{D}^{1,2}$, we can regard it
as a measurable mapping from
$\R^\HH$ to $\R$, determined $P\circ X^{-1}$-almost surely. Then, for any $t\geq 0$, we have the
so-called {\sl Mehler's formula}:
\begin{equation}\label{mehler}
P_tF=E'\big(F(e^{-t}X+\sqrt{1-e^{-2t}}X')\big),
\end{equation}
where $E'$ denotes the mathematical expectation with respect to the probability $P'$.
By means of this formula, it is immediate to prove that $P_t$ is a contraction
operator on $L^p(\Omega)$, for all $p\geq 1$.

The operator $L$ is defined as
$L=\sum_{q=0}^{\infty }-qJ_{q}$,
and it can be proven to be the infinitesimal generator of the Ornstein-Uhlenbeck
semigroup $(P_t)_{t\geq 0}$.
The domain of $L$ is
\begin{equation*}
\mathrm{Dom}L=\{F\in L^2(\Omega ):\sum_{q=1}^{\infty }q^{2}\left\|
J_{q}F\right\| _{L^2(\Omega )}^{2}<\infty \}=\mathbb{D}^{2,2}\text{.}
\end{equation*}
There is an important relation between the operators $D$, $\delta $ and $L$.
A random variable $F$ belongs to
$\mathbb{D}^{2,2}$ if and only if $F\in \mathrm{Dom}\left( \delta D\right) $
(i.e. $F\in {\mathbb{D}}^{1,2}$ and $DF\in \mathrm{Dom}\delta $) and, in
this case,
\begin{equation}
\delta DF=-LF.  \label{k1}
\end{equation}

For any $F \in L^2(\Omega )$, we define $L^{-1}F =\sum_{q=0}^{\infty }-\frac{1}{q} J_{q}(F)$. The operator $L^{-1}$ is called the
\textsl{pseudo-inverse} of $L$. Indeed, for any $F \in L^2(\Omega )$, we have that $L^{-1} F \in  \mathrm{Dom}L
= \mathbb{D}^{2,2}$,
and
\begin{equation}\label{Lmoins1}
LL^{-1} F = F - E(F).
\end{equation}

We now present two useful lemmas, that we will need throughout the sequel. The first statement exploits
the two fundamental relations (\ref{k1}) and (\ref{Lmoins1}). Note that these relations have been extensively applied in \cite{NP-PTRF}, in the context of the normal approximation of functionals of Gaussian fields by means of Stein's method.

\begin{lemma}\label{L : Tech1}
Suppose that $F\in\mathbb{D}^{1,2}$ and $G\in L^2(\Omega)$. Then,
$L^{-1}G \in \mathbb{D}^{2,2}$ and we have:
\begin{equation}
E(FG) = E(F)E(G)+E(\langle DF,-DL^{-1}G\rangle_{\HH}).
\end{equation}
\end{lemma}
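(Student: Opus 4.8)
The plan is to derive the identity from the integration by parts formula (\ref{ipp}) by choosing the divergence-form element $u = -DL^{-1}G$. First I would observe that, since $G\in L^2(\Omega)$, the definition of $L^{-1}$ gives $L^{-1}G = \sum_{q\geq 1} -\frac1q J_q G \in \mathbb{D}^{2,2}=\mathrm{Dom} L$, and by the relation $\delta D = -L$ from (\ref{k1}), the element $DL^{-1}G$ belongs to $\mathrm{Dom}\,\delta$ with $\delta(DL^{-1}G) = -LL^{-1}G$. Applying (\ref{Lmoins1}), this equals $-(G - E(G))$, so that $\delta(-DL^{-1}G) = G - E(G)$.

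Next I would plug $u = -DL^{-1}G$ into the duality formula (\ref{ipp}): for $F\in\mathbb{D}^{1,2}$ we get
\[
E\big(F\,\delta(-DL^{-1}G)\big) = E\big(\langle DF, -DL^{-1}G\rangle_{\HH}\big).
\]
Using the computation of the previous paragraph, the left-hand side becomes $E\big(F(G-E(G))\big) = E(FG) - E(F)E(G)$. Rearranging gives exactly
\[
E(FG) = E(F)E(G) + E\big(\langle DF, -DL^{-1}G\rangle_{\HH}\big),
\]
which is the claimed identity, and along the way we have recorded $L^{-1}G\in\mathbb{D}^{2,2}$.

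The only genuine point requiring care — and the main obstacle, such as it is — is the justification that $DL^{-1}G$ actually lies in $\mathrm{Dom}\,\delta$ and that the integration by parts formula is applicable for every $F\in\mathbb{D}^{1,2}$, rather than merely for $F$ in the smaller class $\mathbb{D}^{2,2}$ where $\delta DF = -LF$ is available; but this is precisely the content of the stated equivalence "$F\in\mathbb{D}^{2,2}$ iff $F\in\mathrm{Dom}(\delta D)$" applied to $L^{-1}G$, together with the fact that (\ref{ipp}) holds for all $F\in\mathbb{D}^{1,2}$ by definition of the divergence. One should also note that the inner product $\langle DF, -DL^{-1}G\rangle_{\HH}$ is genuinely integrable: $DF\in L^2(\Omega,\HH)$ since $F\in\mathbb{D}^{1,2}$, and $DL^{-1}G\in L^2(\Omega,\HH)$ since $L^{-1}G\in\mathbb{D}^{2,2}\subset\mathbb{D}^{1,2}$, so Cauchy–Schwarz in $L^2(\Omega)$ takes care of it. No further estimates are needed.
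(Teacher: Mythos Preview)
Your proof is correct and follows essentially the same route as the paper: both arguments write $G-E(G)=LL^{-1}G=-\delta D L^{-1}G$ using (\ref{k1}) and (\ref{Lmoins1}), and then apply the integration by parts formula (\ref{ipp}) with $u=-DL^{-1}G$. Your version is somewhat more explicit about the domain checks (that $DL^{-1}G\in\mathrm{Dom}\,\delta$ and that the inner product is integrable), but the underlying computation is identical.
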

\begin{proof}
{}By (\ref{k1}) and (\ref{Lmoins1}),
\[
E(FG)-E(F)E(G)=E(F(G-E(G)))=E(F\times LL^{-1}G)=E(F \delta(- DL^{-1}G) ),
\]
and the result is obtained by using the integration by parts formula (\ref{ipp}).
\end{proof}

\begin{rem}{\rm
Observe that $\langle DF,-DL^{-1}G\rangle_{\HH} $ is not necessarily square-integrable, albeit it is by
construction in $L^1(\Omega)$. On the other hand, we have
\begin{equation}\label{EQ : Ptcum}
\langle DF,-DL^{-1}G\rangle_{\HH} = \int_0^{\infty}e^{-t} \langle DF,P_t DG\rangle_{\HH}\,dt,
\end{equation}
see indeed identity (3.46) in \cite{NP-PTRF}.
}
\end{rem}

The next result is a consequence of the previous Lemma \ref{L : Tech1}.

\begin{prop}
Fix $p\geq 2$, and assume that $F\in L^{4p-4}(\Omega) \cap \mathbb{D}^{1,4}$.
Then, $F^p\in \mathbb{D}^{1,2}$ and  $DF^p = pF^{p-1}DF$. Moreover, for any $G\in L^2(\Omega)$,
\begin{equation}\label{EQ : simple}
E(F^pG) = E(F^p)E(G)+pE(F^{p-1}\langle DF,-DL^{-1}G\rangle_{\HH}).
\end{equation}
\end{prop}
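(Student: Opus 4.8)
The plan is to establish the two differentiability claims first and then derive the expectation identity as an immediate consequence of Lemma \ref{L : Tech1}. The main work is the approximation argument needed to show $F^p \in \mathbb{D}^{1,2}$ together with the chain-rule formula $DF^p = pF^{p-1}DF$, since the naive chain rule quoted in Section \ref{S : Malliavin} requires $\varphi$ to have \emph{bounded} partial derivatives, which $x \mapsto x^p$ does not.

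First I would treat the differentiability. Fix a sequence $\psi_N : \R \to \R$ of $C^1$ functions with bounded derivatives, agreeing with $x \mapsto x^p$ on $[-N,N]$ and with $|\psi_N(x)| \leq C(1+|x|^p)$, $|\psi_N'(x)| \leq C(1+|x|^{p-1})$ uniformly in $N$ (a standard smooth truncation/mollification). By the chain rule, $\psi_N(F) \in \mathbb{D}^{1,2}$ with $D\psi_N(F) = \psi_N'(F)\,DF$. I would then check that $\psi_N(F) \to F^p$ in $L^2(\Omega)$ and that $\psi_N'(F)\,DF \to pF^{p-1}DF$ in $L^2(\Omega;\HH)$ as $N \to \infty$. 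For the first convergence, dominated convergence applies because $|\psi_N(F)|^2 \leq C(1+|F|^p)^2 \lesssim 1 + |F|^{2p}$ and $F \in L^{2p}(\Omega)$ (note $2p \leq 4p-4$ for $p \geq 2$). For the second, $\|\psi_N'(F)DF - pF^{p-1}DF\|_{\HH} \leq |\psi_N'(F) - pF^{p-1}|\,\|DF\|_{\HH}$, which is dominated by $C(1+|F|^{p-1})\|DF\|_{\HH}$; squaring and applying Cauchy--Schwarz with exponents matching $F \in L^{4p-4}(\Omega)$ and $DF \in L^4(\Omega;\HH)$ gives the required uniform integrability, and the integrand tends to $0$ pointwise. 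Since $D$ is a closed operator, it follows that $F^p \in \mathbb{D}^{1,2}$ and $DF^p = pF^{p-1}DF$.

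Once the chain rule is secured, the identity (\ref{EQ : simple}) is obtained by applying Lemma \ref{L : Tech1} with the pair $(F^p, G)$ in place of $(F,G)$: since $F^p \in \mathbb{D}^{1,2}$ and $G \in L^2(\Omega)$, the lemma gives
\[
E(F^p G) = E(F^p)E(G) + E\big(\langle DF^p, -DL^{-1}G\rangle_{\HH}\big),
\]
and substituting $DF^p = pF^{p-1}DF$ and pulling the scalar $pF^{p-1}$ out of the inner product yields the claimed formula.

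The only genuine obstacle is making the limiting argument rigorous while respecting the integrability hypotheses; in particular one must verify that the exponents $2p$ and (via Hölder) the pairing $4p-4$ with $4$ are exactly what the assumption $F \in L^{4p-4}(\Omega) \cap \mathbb{D}^{1,4}$ provides, and that the products $F^{p-1}\langle DF, -DL^{-1}G\rangle_{\HH}$ appearing on the right-hand side are at least in $L^1(\Omega)$ so that the final identity is meaningful — this follows since $\langle DF, -DL^{-1}G\rangle_{\HH} \in L^1(\Omega)$ by Lemma \ref{L : Tech1} and an additional Hölder estimate using the $L^4$ control on $DF$ and $L^2$ control on $DL^{-1}G$ (valid because $L^{-1}G \in \mathbb{D}^{2,2}$), together with $F^{p-1} \in L^{(4p-4)/(p-1)}(\Omega) = L^4(\Omega)$.
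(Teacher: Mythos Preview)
Your proposal is correct and follows the same route as the paper, which in fact does not spell out a proof at all but merely records that the proposition is ``a consequence of the previous Lemma~\ref{L : Tech1}''. Your approximation argument for the chain rule $DF^p = pF^{p-1}DF$ and your H\"older bookkeeping (pairing $L^{4p-4}$ with $L^4$) supply exactly the details the paper leaves implicit, and the final step---applying Lemma~\ref{L : Tech1} to $(F^p,G)$---is the intended one.
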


\section{A recursive formula for cumulants}\label{S : RecursiveCum}

The aim of this section is to deduce from formula (\ref{EQ : RecMom}) a recursive relation for cumulants of sufficiently regular functionals of the isonormal process $X$. To do this, we need to (recursively) introduce some further notation.

\begin{definition}\label{Def : Gamma}{\rm Let $F\in \mathbb{D}^{1,2}$. We define $\Gamma_0(F) = F$
and $\Gamma_1(F) = \langle DF,-DL^{-1}F\rangle_{\HH}$. If, for $j\geq 1$,
the random variable $\Gamma_j(F)$ is a well-defined element of $L^2(\Omega)$,
we set $\Gamma_{j+1}(F) = \langle DF,-DL^{-1}\Gamma_j(F)\rangle_{\HH}$.
Observe that the definition of $\Gamma_{j+1}(F)$ is well given, since (as already observed in general) the square-integrability
of $\Gamma_j(F)$ implies that $L^{-1}\Gamma_j(F) \in {\rm Dom}\,L=\mathbb{D}^{2,2}\subset \mathbb{D}^{1,2}$.
}
\end{definition}

The following statement provides sufficient conditions on $F$, ensuring that the random variable
$\Gamma_j(F)$ is well defined.

\begin{lemma}\label{easyrem}
\begin{enumerate}
\item Fix an integers $j\geq 1$, and let $F,G\in\mathbb{D}^{j,2^j}$. Then
$\langle DF,-DL^{-1}G\rangle_\HH\in \mathbb{D}^{j-1,2^{j-1}}$,
where we set by convention (but consistently!) $\mathbb{D}^{0,1}=L^1(\Omega)$.
\item Fix an integer $j\geq 1$, and let $F\in\mathbb{D}^{j,2^j}$. Then, for all
$k=1,\ldots,j$, we have that $\Gamma_k(F)$ is a well-defined element of $\mathbb{D}^{j-k,2^{j-k}}$;
in particular, one has that $\Gamma_k(F)\in L^1(\Omega)$ and that the quantity
$E[\Gamma_k(F)]$ is well-defined and finite.
\item If $F\in\mathbb{D}^{\infty}$ (in particular if $F$ equals a {\rm finite} sum of multiple integrals), then $\Gamma_j(F) \in \mathbb{D}^{\infty}$ for every $j\geq 0$.
\end{enumerate}
\end{lemma}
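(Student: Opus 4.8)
The plan is to prove the three items in order; item 1 is the only one with analytic substance, and items 2 and 3 are then a routine induction and a soft intersection argument. I would rely on two standard facts from the references quoted in Section \ref{S : Malliavin}: (a) the Leibniz/product rule for the $\HH$-inner product, namely that if $u\in\mathbb{D}^{m,p}(\Omega;\HH)$, $v\in\mathbb{D}^{m,q}(\Omega;\HH)$ and $1/r=1/p+1/q$ with $r\geq1$, then $\langle u,v\rangle_\HH\in\mathbb{D}^{m,r}(\Omega)$ (for $m=0$ this is just Cauchy--Schwarz followed by H\"older, and for $m\geq1$ it is the vector-valued version of the product rule in \cite{nualartbook}); and (b) the contractivity of $P_t$ on $L^{p}(\Omega;\HH^{\otimes\ell})$ for every $p\geq1$ and $\ell\geq0$, which is immediate from Mehler's formula \eqref{mehler}.

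For item 1, the key point is that $L^{-1}$ is \emph{smoothing}: it costs neither derivatives nor integrability. Concretely, expanding in Wiener chaos as for \eqref{EQ : Ptcum}, one obtains, for every integer $\ell\geq1$,
\[
-D^{\ell}L^{-1}G=\int_{0}^{\infty}e^{-\ell t}\,P_{t}D^{\ell}G\,dt ,
\]
so that Minkowski's integral inequality and (b) give $\|D^{\ell}L^{-1}G\|_{L^{p}(\Omega;\HH^{\otimes\ell})}\leq \ell^{-1}\|D^{\ell}G\|_{L^{p}(\Omega;\HH^{\otimes\ell})}$ for all $p\geq1$. In particular, $G\in\mathbb{D}^{j,2^{j}}$ forces $-DL^{-1}G\in\mathbb{D}^{j-1,2^{j}}(\Omega;\HH)$, since only the derivatives of $G$ of orders $1,\dots,j$ enter; likewise $F\in\mathbb{D}^{j,2^{j}}$ trivially gives $DF\in\mathbb{D}^{j-1,2^{j}}(\Omega;\HH)$. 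Applying (a) with $m=j-1$, $p=q=2^{j}$ and $r=2^{j-1}$ (note $\tfrac1{2^{j}}+\tfrac1{2^{j}}=\tfrac1{2^{j-1}}$) then yields $\langle DF,-DL^{-1}G\rangle_\HH\in\mathbb{D}^{j-1,2^{j-1}}(\Omega)$; for $j=1$ this is the statement $\langle DF,-DL^{-1}G\rangle_\HH\in L^{1}(\Omega)$, consistent with the convention $\mathbb{D}^{0,1}=L^{1}(\Omega)$. The substantive content here is precisely the dyadic bookkeeping — the choice $\mathbb{D}^{j,2^{j}}$ is what makes one inner product (one H\"older step) descend exactly one notch, to $\mathbb{D}^{j-1,2^{j-1}}$ — together with the smoothing identity above and the closure step hidden inside (a), where one approximates $F$ and $G$ by cylindrical functionals; no genuinely hard estimate is needed.

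Item 2 follows by induction on $k$. For $k=1$, apply item 1 with $G=F$ to get $\Gamma_1(F)\in\mathbb{D}^{j-1,2^{j-1}}$. If $\Gamma_k(F)\in\mathbb{D}^{j-k,2^{j-k}}$ for some $1\leq k\leq j-1$, then, since $\mathbb{D}^{m,p}$ decreases in both $m$ and $p$ (the latter because $P$ is a probability measure), also $F\in\mathbb{D}^{j-k,2^{j-k}}$, and item 1 — applied with the integer $j-k\geq1$ in place of $j$, the same $F$, and $G=\Gamma_k(F)$ — gives $\Gamma_{k+1}(F)=\langle DF,-DL^{-1}\Gamma_k(F)\rangle_\HH\in\mathbb{D}^{(j-k)-1,2^{(j-k)-1}}=\mathbb{D}^{j-(k+1),2^{j-(k+1)}}$. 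This closes the induction; the case $k=j$ gives $\Gamma_j(F)\in\mathbb{D}^{0,1}=L^{1}(\Omega)$, and for $k<j$ one has $\mathbb{D}^{j-k,2^{j-k}}\subseteq L^{2^{j-k}}(\Omega)\subseteq L^{1}(\Omega)$, so in every case $E[\Gamma_k(F)]$ is well defined and finite.

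Finally, item 3 is immediate once one notes the elementary identity $\mathbb{D}^{\infty}=\bigcap_{m\geq0}\mathbb{D}^{m,2^{m}}$ (the non-trivial inclusion: given $m_0,p_0$, pick $m\geq m_0$ with $2^{m}\geq p_0$ and use monotonicity). If $F\in\mathbb{D}^{\infty}$ then $F\in\mathbb{D}^{j,2^{j}}$ for every $j$, so for fixed $k\geq1$, letting $j\to\infty$ in the inclusion $\Gamma_k(F)\in\mathbb{D}^{j-k,2^{j-k}}$ of item 2 gives $\Gamma_k(F)\in\bigcap_{m\geq0}\mathbb{D}^{m,2^{m}}=\mathbb{D}^{\infty}$; the case $k=0$ is trivial since $\Gamma_0(F)=F$. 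In particular $\Gamma_j(F)\in\mathbb{D}^{\infty}$ whenever $F$ is a finite sum of multiple Wiener--It\^{o} integrals, as any such $F$ belongs to $\mathbb{D}^{\infty}$.
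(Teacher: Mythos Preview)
Your proof is correct and follows essentially the same route as the paper. The only difference is one of packaging: for item 1 you invoke the product rule $\langle u,v\rangle_\HH\in\mathbb{D}^{m,r}$ (your fact (a)) as a black box, whereas the paper spells out exactly this computation inline via the Leibniz expansion of $D^k\langle DF,-DL^{-1}G\rangle_\HH$ followed by Cauchy--Schwarz; the smoothing identity $-D^\ell L^{-1}G=\int_0^\infty e^{-\ell t}P_tD^\ell G\,dt$ and the $P_t$-contractivity step are identical, and items 2 and 3 match the paper's induction verbatim (with your item 3 being slightly more explicit than the paper's one-line remark).
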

{\it Proof.}
Without loss of generality, we assume during all the proof that $\HH$ has the form
$L^{2}(\mathbb{A},\mathscr{A},\mu)$, where $(\mathbb{A},\mathscr{A})$ is
a measurable space and $\mu$ is a $\sigma$-finite measure with no atoms.

1. Let $k\in\{0,\ldots,j-1\}$. Using Leibniz rule  for $D$
(see e.g. \cite[Exercice 1.2.13]{nualartbook}), one has that
\begin{eqnarray} \label{Lauma}
-D^k
\langle DF,-DL^{-1}G\rangle_\HH&=&
D^k
\int_{\mathbb{A}}D_aF\,D_aL^{-1}G\,\mu(da)\\
&=&\sum_{l=0}^k \binom{k}{l}
\int_{\mathbb{A}}D^{k-l}(D_aF)\widetilde{\otimes} D^{l}(D_aL^{-1}G)\,\mu(da), \notag
\end{eqnarray}
with $\widetilde{\otimes}$ the usual symmetric tensor product.
%Note that, to deduce (\ref{Lauma}), it is sufficient to consider random variables
%$F,G$ that are equal to a finite sum of multiple integrals, and then to apply
%a standard approximation argument (for instance, by using the fact that such random
%variables are dense in each space $\mathbb{D}^{a,b}$, $a\geq 1$, $b>1$ --
%see \cite[Corollary 1.5.1]{nualartbook}).
Note that, to deduce (\ref{Lauma}), it is sufficient to consider random variables $F,G$ that have the form (\ref{v3}) (for which the formula is evident, since it basically boils down to the original Leibniz rule for differential calculus), and then to apply a standard approximation argument.
From (\ref{Lauma}), one therefore deduces that
\begin{eqnarray}
&&\|D^k
\langle DF,-DL^{-1}G\rangle_\HH\|_{\HH^{\otimes k}}\notag\\
&\leq&\sum_{l=0}^k \binom{k}{l}
\left\|\int_{\mathbb{A}}D^{k-l}(D_aF)\widetilde{\otimes} D^{l}(D_aL^{-1}G)\,\mu(da)\right\|_{\HH^{\otimes k}}\notag\\
&\leq&\sum_{l=0}^k \binom{k}{l}
\int_{\mathbb{A}}\|D^{k-l}(D_aF)\|_{\HH^{\otimes(k-l)}}\| D^{l}(D_aL^{-1}G)\|_{\HH^{\otimes l}}\mu(da)\notag\\
&\leq&\sum_{l=0}^k \binom{k}{l}
\sqrt{\int_{\mathbb{A}}\|D^{k-l}(D_aF)\|_{\HH^{\otimes(k-l)}}^2 \mu(da)}
\sqrt{\int_{\mathbb{A}}\|D^{l}(D_aL^{-1}G)\|^2_{\HH^{\otimes l}}\mu(da)}\notag\\
&=&\sum_{l=0}^k \binom{k}{l}
\|D^{k-l+1}F\|_{\HH^{\otimes(k-l+1)}}
\|D^{l+1}L^{-1}G\|_{\HH^{\otimes (l+1)}}.\label{coujou}
\end{eqnarray}
By mimicking the arguments used in the proof of Proposition 3.1 in \cite{Noupecrei2}
(see also (\ref{EQ : Ptcum})), it is possible
to prove that
\[
-D^{l+1}L^{-1}G =\int_0^\infty e^{-(l+1)t}P_tD^{l+1}G dt.
\]
Consequently, for any real $p\geq 1$,
\begin{eqnarray}
E\big[\|D^{l+1}L^{-1}G\|^p_{\HH^{\otimes (l+1)}}\big]
&\leq&
E\left[\left(\int_0^\infty e^{-(l+1)t}\|P_tD^{l+1}G\|_{\HH^{\otimes (l+1)}} dt\right)^p\right]\notag\\
&\leq&
\frac{1}{(l+1)^{p-1}}
\int_0^\infty e^{-(l+1)t}E\big[\|P_tD^{l+1}G\|^p_{\HH^{\otimes (l+1)}}\big]dt\notag\\
&\leq&
\frac{1}{(l+1)^{p-1}}E\big[\|D^{l+1}G\|^p_{\HH^{\otimes (l+1)}}\big]
\int_0^\infty e^{-(l+1)t}dt\notag\\
&=&\frac{1}{(l+1)^p}\,E\big[\|D^{l+1}G\|^p_{\HH^{\otimes (l+1)}}\big],\label{coujou2}
\end{eqnarray}
where, to get the last inequality, we used the contraction property of $P_t$ on $L^p(\Omega)$.
Finally, by combining (\ref{coujou}) with (\ref{coujou2}) through the Cauchy-Schwarz inequality
on the one hand,
and by the assumptions on $F$ and $G$ on the other hand, we immediately get
that $\|D^k\langle DF,-DL^{-1}G\rangle_\HH\|_{\HH^{\otimes k}}$ belongs to
$L^{2^{j-1}}(\Omega)$ for
all $k=0,\ldots, j-1$, yielding the announced result.

2. Fix $j\geq 1$ and $F\in\mathbb{D}^{j,2^j}$.
The proof is achieved by recursion on $k$.
For $k=1$, the desired property is true, due to Point 1 applied to $G=F$.
Now, assume that the desired property is true for $k$ ($\leq j-1$), and let us prove that it also holds for $k+1$.
Indeed, we have that $\Gamma_{k+1}(F)=\langle DF,-DL^{-1}\Gamma_k(F)\rangle_\HH$,
that $F\in\mathbb{D}^{j,2^j}\subset \mathbb{D}^{j-k,2^{j-k}}$
(assumption on $F$) and that
$\Gamma_k(F)\in\mathbb{D}^{j-k,2^{j-k}}$ (recurrence assumption). Point 1
yields the desired conclusion.

3. The proof is immediately obtained by a repeated application of Point 2.
\fin

We will now provide a new characterization of cumulants for functionals of Gaussian processes: it is the
fundamental tool yielding the main results of the paper. Note that, due to
Lemma \ref{easyrem} (Point 2), the following statement applies
in particular to random variables in $\mathbb{D}^{m,2^m}$ ($m\geq 2$).

\begin{theorem}\label{T : MAIN} Fix an integer $m\geq 2$, and suppose that:
(i) the random variable $F$ is an element of $L^{4m-4}(\Omega)\cap \mathbb{D}^{1,4}$,
(ii) for every $j\leq m-1$, the random variable $\Gamma_{j}(F)$ is in $L^2(\Omega)$.
Then, for every $s\leq m$,
\begin{equation}\label{belleformule}
\kappa_{s+1}(F) = s!E[\Gamma_s(F)].
\end{equation}
\end{theorem}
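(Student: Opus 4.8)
The plan is to combine the moment--cumulant recursion of Proposition \ref{P : recmom} with an iterated version of the integration-by-parts identity (\ref{EQ : simple}), and then to conclude by induction on $s$. The conceptual content is modest; the work is in the bookkeeping and in checking that the integrability assumptions (i)--(ii) are exactly what is needed.

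The first step I would carry out is an auxiliary moment expansion: under the hypotheses of the theorem, for every integer $n$ with $1\leq n\leq m+1$,
\[
E(F^{n}) \;=\; \sum_{r=0}^{n-1} \frac{(n-1)!}{(n-1-r)!}\, E\big(F^{\,n-1-r}\big)\, E[\Gamma_{r}(F)].
\]
This is proved by writing $E(F^{n})=E(F^{n-1}\cdot F)$ and ``peeling off'' one $\Gamma$ at a time. Applying (\ref{EQ : simple}) with $p=n-1$ and $G=F$ gives $E(F^{n})=E(F^{n-1})E(F)+(n-1)E\big(F^{n-2}\Gamma_{1}(F)\big)$ (recall $\langle DF,-DL^{-1}F\rangle_{\HH}=\Gamma_{1}(F)$); then one applies (\ref{EQ : simple}) to the residual term $E\big(F^{n-2}\Gamma_{1}(F)\big)$ with $G=\Gamma_{1}(F)$, producing $\Gamma_{2}(F)$, and so on, using Lemma \ref{L : Tech1} in place of (\ref{EQ : simple}) whenever the power of $F$ drops to $1$, and stopping when it reaches $0$, where $E\big(F^{0}\Gamma_{n-1}(F)\big)=E[\Gamma_{n-1}(F)]$. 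The only arithmetic to check is the coefficient recursion $\frac{(n-1)!}{(n-1-k)!}\cdot(n-1-k)=\frac{(n-1)!}{(n-1-(k+1))!}$.

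Next I would prove $\kappa_{s+1}(F)=s!\,E[\Gamma_{s}(F)]$ by induction on $s\in\{0,1,\dots,m\}$. The base case $s=0$ is just $\kappa_{1}(F)=E(F)=E[\Gamma_{0}(F)]$. For the inductive step, Proposition \ref{P : recmom} with parameter $s$ yields
\[
E(F^{s+1}) \;=\; \kappa_{s+1}(F) + \sum_{r=0}^{s-1}\binom{s}{r}\kappa_{r+1}(F)\,E\big(F^{\,s-r}\big);
\]
substituting the inductive hypothesis $\kappa_{r+1}(F)=r!\,E[\Gamma_{r}(F)]$ for $r\leq s-1$ and using $\binom{s}{r}r!=\frac{s!}{(s-r)!}=\frac{((s+1)-1)!}{((s+1)-1-r)!}$, one gets $E(F^{s+1})$ as $\kappa_{s+1}(F)$ plus exactly the terms $r=0,\dots,s-1$ of the auxiliary expansion with $n=s+1$. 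Since the $r=s$ term of that expansion is precisely $s!\,E[\Gamma_{s}(F)]$ (because $E(F^{0})=1$), comparing the two expressions for $E(F^{s+1})$ makes all lower-order terms cancel and leaves $\kappa_{s+1}(F)=s!\,E[\Gamma_{s}(F)]$.

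The main obstacle, as anticipated, is purely the verification that the hypotheses legitimise every invocation of (\ref{EQ : simple}) and Lemma \ref{L : Tech1}. The largest power of $F$ ever produced is $F^{m}$, arising in the first peeling step for $n=m+1$ (i.e. for $s=m$), which is exactly why one needs $F\in L^{4m-4}(\Omega)\cap\mathbb{D}^{1,4}$; moreover the residual terms use only $\Gamma_{1}(F),\dots,\Gamma_{m-1}(F)$ as $L^{2}$-inputs of (\ref{EQ : simple}), which is assumption (ii), while $\Gamma_{m}(F)$ appears solely at the very last step as the quantity $E[\Gamma_{m}(F)]$, which is finite since $\Gamma_{m}(F)\in L^{1}(\Omega)$ by Cauchy--Schwarz. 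One also checks $4m-4\geq m+1$ for $m\geq 2$, so that all moments of $F$ up to order $m+1$ are finite, as required when applying Proposition \ref{P : recmom} with parameters up to $m$. With these observations in place the two-line induction above completes the proof.
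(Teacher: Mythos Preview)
Your proof is correct and follows essentially the same approach as the paper's: iterate the integration-by-parts identity (\ref{EQ : simple}) to peel off successive $\Gamma_j(F)$ from $E(F^{s+1})$, and compare with the moment--cumulant recursion (\ref{EQ : RecMom}), concluding by induction on $s$. The only difference is organisational: you first isolate the peeled expansion $E(F^{n})=\sum_{r=0}^{n-1}\frac{(n-1)!}{(n-1-r)!}E(F^{n-1-r})E[\Gamma_r(F)]$ as a stand-alone identity and then invoke the inductive hypothesis all at once, whereas the paper interleaves the two, converting each $E[\Gamma_j(F)]$ into $\kappa_{j+1}(F)/j!$ as soon as it appears. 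Your integrability checks (largest power $F^{m}$, $\Gamma_j(F)\in L^2$ for $j\leq m-1$, $\Gamma_m(F)\in L^1$ by Cauchy--Schwarz, and $4m-4\geq m+1$) are exactly the right ones.
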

\begin{proof}{}
The proof is achieved by recursion on $s$. First observe that $\kappa_1(F) = E(F) =
 E[\Gamma_0(F)]$, so that the claim is proved for $s=0$. Now suppose that (\ref{belleformule}) holds for every $s=0,...,l$, where $l\leq m-1$. According to (\ref{EQ : RecMom}), we have that
\begin{equation}\label{jimmy}
E(F^{l+1}) = \sum_{s=0}^{l-1}\binom{l}{s} \kappa_{s+1}(F) E(F^{l-s}) + \kappa_{l+1}(F).
\end{equation}
On the other hand, by applying (\ref{EQ : simple}) to the case $p=l$ and $G=F$, we deduce that
\[
E(F^{l+1})=E(F^{l})E(F)+  lE(F^{l-1}\langle DF,-DL^{-1}F\rangle_{\HH}) = E(F^{l})\kappa_1(F)+  lE(F^{l-1}\Gamma_1(F)).
\]
By the recurrence assumption, and by applying again (\ref{EQ : simple}) to the case $p=l-1$ and $G=\Gamma_1(F)$, one deduces therefore that
\begin{eqnarray*}
&& E(F^{l+1})=E(F^{l})\kappa_1(F)+  lE(F^{l-1}\Gamma_1(F))\\
&&= \kappa_1(F)E(F^{l})+ \kappa_2(F) l E(F^{l-1}) + l(l-1)E(F^{l-2}\Gamma_2(F)) .
\end{eqnarray*}
Iterating this procedure yields eventually
\[
E(F^{l+1}) = \sum_{s=0}^{l-1}\binom{l}{s} \kappa_{s+1}(F) E(F^{l-s}) + l! E[\Gamma_l(F)],
\]
so that one deduces from (\ref{jimmy}) that relation (\ref{belleformule}) holds for $s=l+1$.
The proof is concluded.
\end{proof}

\begin{rem}{\rm Suppose that $F=I_q(f)$, where $q\geq 2$ and $f\in \HH^{\odot q}$. Then, $L^{-1}F = -q^{-1}F$, and consequently
\begin{eqnarray}\label{multCUM}
E[\Gamma_s(F)] &=& E[\langle DF , -DL^{-1}\Gamma_{s-1}(F)\rangle_\HH]
=E[F\Gamma_{s-1}(F)] \\
&=& E[\langle - DL^{-1}F , D\Gamma_{s-1}(F)\rangle_\HH] =\frac1q E[\langle DF , D\Gamma_{s-1}(F)\rangle_\HH].\notag
\end{eqnarray}
}
\end{rem}

Several applications of formula (\ref{belleformule}) (and, implicitly, of (\ref{multCUM})) are detailed in the next section.

\section{Cumulants on Wiener chaos}\label{S : CumCHaos}
\subsection{General statement}
We now focus on the computation of cumulants associated to random variables belonging to a fixed Wiener chaos,
that is, random variables having the form of a multiple Wiener-It\^{o} integral. Our main findings are
collected in the following statement, providing a quite compact representation for cumulants associated with
multiple integrals of arbitrary orders. In the forthcoming Section \ref{SS : Diagrams}, we will compare our results with the classic {\sl diagram formulae} that are customarily used in the probabilistic literature.
\begin{thm}\label{thm-pasmal}
Let $q\geq 2$, and assume that $F=I_q(f)$, where $f\in\HH^{\odot q}$. Denote by $\kappa_s(F)$,
$s\geq1$, the cumulants of $F$.
We have $\kappa_1(F)=0$, $\kappa_2(F)=q!\|f\|^2_{\HH^{\otimes q}}$ and,
for every $s\geq 3$,
\begin{equation}\label{formula-cumulants}
\kappa_s(F)= q!(s-1)!\sum c_q(r_1,\ldots,r_{s-2})
\big\langle (...((f\widetilde{\otimes}_{r_1} f) \widetilde{\otimes}_{r_2} f)\ldots
\widetilde{\otimes}_{r_{s-3}}f)\widetilde{\otimes}_{r_{s-2}}f,f\big\rangle_{\HH^{\otimes q}},
\end{equation}
where the sum $\sum$ runs over all collections of integers $r_1,\ldots,r_{s-2}$ such that:
\begin{enumerate}
\item[(i)] $1\leq r_1,\ldots, r_{s-2}\leq q$;
\item[(ii)] $r_1+\ldots+r_{s-2}=\frac{(s-2)q}{2}$;
\item[(iii)] $r_1<q$, $r_1+r_2 <\frac{3q}{2}$, $\ldots$, $r_1+\ldots +r_{s-3}< \frac{(s-2)q}2$;
\item[(iv)] $r_2\leq 2q-2r_1$, $\ldots$, $r_{s-2}\leq (s-2)q-2r_1-\ldots-2r_{s-3}$;
\end{enumerate}
and where the combinatorial constants $c_q(r_1,\ldots,r_{s-2})$ are recursively defined by the
relations
\[
c_q(r)=q(r-1)!\binom{q-1}{r-1}^2,
\]
and, for $a\geq 2$,
\[
c_q(r_1,\ldots,r_{a})=q(r_{a}-1)!\binom{aq-2r_1-\ldots - 2r_{a-1}-1}{r_{a}-1}
\binom{q-1}{r_{a}-1}c_q(r_1,\ldots,r_{a-1}).
\]
\end{thm}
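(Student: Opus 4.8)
The natural strategy is to combine Theorem~\ref{T : MAIN} (applied to $F = I_q(f)$) with the remark following it, and then compute $\Gamma_s(F)$ iteratively using the multiplication formula (\ref{multiplication}). Since $L^{-1}I_q(f) = -q^{-1}I_q(f)$, the recursion of Definition~\ref{Def : Gamma} simplifies to $\Gamma_{j+1}(F) = \tfrac1q\langle DF, D\Gamma_j(F)\rangle_\HH$, and one checks easily that $F$ is a finite sum of multiple integrals so that Lemma~\ref{easyrem}(3) guarantees all the $\Gamma_j(F)$ are well-defined; thus formula (\ref{belleformule}) applies for every $s$. The base cases $\kappa_1(F) = E(F) = 0$ and $\kappa_2(F) = E[\Gamma_1(F)] = E[F^2/q]\cdot q$ wait, more precisely $\kappa_2(F) = E[\langle DF, -DL^{-1}F\rangle_\HH] = E[\tfrac1q\langle DF,DF\rangle_\HH] = q!\|f\|^2_{\HH^{\otimes q}}$ follow from $D I_q(f) = q I_{q-1}(f(\cdot,\ast))$ and the isometry property.

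\textbf{The inductive computation.} The main work is to show by induction on $a\geq 1$ that $\Gamma_a(F)$ is a finite linear combination of multiple integrals of the form
\[
\Gamma_a(F) = \sum_{r_1,\ldots,r_a} \widetilde{c}_q(r_1,\ldots,r_a)\, I_{(a+1)q - 2r_1 - \cdots - 2r_a}\Big( \big(\cdots((f\widetilde\otimes_{r_1}f)\widetilde\otimes_{r_2}f)\cdots\big)\widetilde\otimes_{r_a}f\Big),
\]
where the sum runs over $1\leq r_i\leq q$ subject to the constraints that keep each intermediate contraction order between $0$ and the available number of slots — precisely conditions (i), (iii), (iv) truncated at level $a$ — and where $\widetilde{c}_q$ obeys the stated recursion (one expects $\widetilde{c}_q = c_q$ possibly up to the extra factor that is absorbed into the final expectation). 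For the inductive step, one writes $DF = qI_{q-1}(f)$ and $D\Gamma_a(F)$ as a sum of terms $qI_{p-1}(\cdots)$ with $p = (a+1)q - 2(r_1+\cdots+r_a)$, takes the $\HH$-inner product, and applies the multiplication formula (\ref{multiplication}): the product $I_{q-1}(f)I_{p-1}(g)$ integrated against the $\HH$ pairing produces a sum over a new contraction index $r_{a+1}$, generating the new tensor $(\cdots)\widetilde\otimes_{r_{a+1}}f$ and a new binomial factor $r_{a+1}!\binom{q-1}{r_{a+1}}\binom{p-1}{r_{a+1}}$ — which, after shifting $r_{a+1}\mapsto r_{a+1}$ and accounting for the $q$ from $DF$, matches the factor $q(r_{a+1}-1)!\binom{(a+1)q - 2r_1-\cdots-2r_a - 1}{r_{a+1}-1}\binom{q-1}{r_{a+1}-1}$ in the recursion for $c_q$.

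\textbf{Extracting the cumulant.} Finally, by Theorem~\ref{T : MAIN}, $\kappa_s(F) = (s-1)!\, E[\Gamma_{s-1}(F)]$; taking the expectation kills all multiple integrals of order $\geq 1$ in the expansion of $\Gamma_{s-1}(F)$, leaving exactly those terms with $(s-1+1)q - 2(r_1+\cdots+r_{s-1}) = 0$, i.e.\ the order-zero constants. Parametrizing these by setting $r_{s-1}$ to be the final contraction that closes the integral — equivalently writing the surviving term as the $\HH^{\otimes q}$-inner product $\langle(\cdots(f\widetilde\otimes_{r_1}f)\cdots\widetilde\otimes_{r_{s-2}}f),f\rangle$ with $r_1+\cdots+r_{s-2} = \tfrac{(s-2)q}{2}$ — reproduces constraint (ii) and converts the order-$s$ integral back into a scalar, with the leftover $q!$ coming from $\|f\|$-type normalization in the last pairing. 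Collecting the combinatorial constants then gives $\kappa_s(F) = q!(s-1)!\sum c_q(r_1,\ldots,r_{s-2})\langle\cdots\rangle$, as claimed.

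\textbf{Main obstacle.} The delicate point is bookkeeping: one must verify that the constraints $0\leq r_{a+1}\leq \min(q-1, p-1)+1$ arising at each application of the multiplication formula are exactly equivalent to the cumulative conditions (i), (iii), (iv), and — crucially — that the symmetrizations $\widetilde\otimes_{r}$ can be freely inserted and removed inside the inner products without changing the value (which holds because $f$ is symmetric and one pairs against $f$ or contracts fully). Getting the index ranges and the recursion for $c_q$ to line up term-by-term with the output of (\ref{multiplication}), including the shift between "$r$" and "$r-1$" in the binomials, is where all the care is needed; the probabilistic content is entirely contained in Theorem~\ref{T : MAIN} and the remark that follows it.
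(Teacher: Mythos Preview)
Your overall strategy is exactly the paper's: establish by induction an explicit chaos expansion of $\Gamma_a(F)$ via the multiplication formula, then apply $\kappa_s(F)=(s-1)!\,E[\Gamma_{s-1}(F)]$ and read off the constant term. The bookkeeping you flag as ``the main obstacle'' is indeed the only real work.

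There is, however, one genuine slip. The claim that the recursion simplifies to
\[
\Gamma_{j+1}(F)=\tfrac{1}{q}\langle DF, D\Gamma_j(F)\rangle_\HH
\]
is false as a pointwise identity. The premise $L^{-1}I_q(f)=-q^{-1}I_q(f)$ is about $F$, but $\Gamma_j(F)$ is \emph{not} in the $q$th chaos: it is a sum of multiple integrals $I_p(g)$ with varying orders $p=(j+1)q-2(r_1+\cdots+r_j)$, so $L^{-1}$ divides each component by its own order $p$, not by $q$. Equation~(\ref{multCUM}) only gives the $\tfrac{1}{q}$ version after taking expectations (via integration by parts), which is useless for the inductive computation of $\Gamma_j(F)$ itself. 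The correct fact, and the one the paper actually uses, is that for each chaos component $-DL^{-1}I_p(g)=I_{p-1}(g)$ regardless of $p$; hence
\[
\langle DF, -DL^{-1}I_p(g)\rangle_\HH = q\,\langle I_{q-1}(f), I_{p-1}(g)\rangle_\HH,
\]
and the factor $q$ in the recursion for $c_q$ comes solely from $DF=qI_{q-1}(f)$. If you carried through your version you would pick up a spurious factor $p/q$ on each term and the constants would not match. Once you fix this, your inductive step and the final extraction (forcing $r_{s-1}=q$, rewriting the full contraction as the inner product with $f$, and using $c_q(r_1,\ldots,r_{s-2},q)=q!\,c_q(r_1,\ldots,r_{s-2})$) go through exactly as in the paper.
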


\begin{remark}
{\rm
\begin{enumerate}
\item
If $sq$ is odd, then $\kappa_s(F)=0$, see indeed condition $(ii)$.
\item
If $q=2$ and $F=I_2(f)$, $f\in\HH^{\odot 2}$, then the only possible integers $r_1,\ldots,r_{s-2}$ verifying $(i)-(iv)$ in the previous statement are $r_1=\ldots=r_{s-2}=1$.
On the other hand, we immediately compute that $c_2(1)=2$, $c_2(1,1)=4$, $c_2(1,1,1)=8$, and so on.
Therefore,
\[
\kappa_s(F)= 2^{s-1}(s-1)!\big\langle (...(f\otimes_1 f)\ldots f)\otimes_1 f, f\big\rangle_{\HH^{\otimes 2}},
\]
and we recover the classical expression of the cumulants of a double integral (as used e.g. in
\cite{FoxTaq} or \cite{exact}).
\item If $q\geq 2$ and $F=I_q(f)$, $f\in\HH^{\odot q}$, then (\ref{formula-cumulants})
for $s=4$ reads
\begin{eqnarray}
\kappa_4(I_q(f))&=&6q!\sum_{r=1}^{q-1}c_q(r,q-r)\big\langle (f\widetilde{\otimes}_rf)\widetilde{\otimes}_{q-r}
f,f\big\rangle_{\HH^{\otimes q}}\notag\\
&=&\frac{3}{q}\sum_{r=1}^{q-1}rr!^2\binom{q}{r}^4(2q-2r)!\big\langle (f\widetilde{\otimes}_rf)\otimes_{q-r}
f,f\big\rangle_{\HH^{\otimes q}}\notag\\
&=&\frac{3}{q}\sum_{r=1}^{q-1}rr!^2\binom{q}{r}^4(2q-2r)!\big\langle f\widetilde{\otimes}_rf,f\otimes_r f
\big\rangle_{\HH^{\otimes (2q-2r)}}\notag\\
&=&\frac{3}{q}\sum_{r=1}^{q-1}rr!^2\binom{q}{r}^4(2q-2r)!
\| f\widetilde{\otimes}_rf\|^2_{\HH^{\otimes (2q-2r)}},
\label{bob}
\end{eqnarray}
and we recover the expression for $\kappa_4(F)$ deduced in \cite[Section 3.1]{Noupecrei3}  by a different route.
Formula (\ref{bob}) should be compared with the
following identity, first established in \cite[p. 183]{nunugio}:
for every $q\geq 2$ and every $f \in \HH^{\odot q}$,
\begin{eqnarray}
\kappa_4(I_q(f)) &=&
\sum_{r=1}^{q-1}\frac{q!^{4}}{r!^2\left( q-r\right) !^{2}}\Big[ \left\| f\otimes
_{r}f\right\| _{\HH^{\otimes (2q-2r) }}^{2}
\label{SelfCont} \\
&&\text{ \ \ \ \ \ \ \ \ \ \ \ \ \ \ \ \ \ \ \ \ \ \ \ \ \ \ \ \ \
\ \ \ \ \ \ \ }+\left. \binom{2q-2r}{q-r}\left\|
f\widetilde{\otimes} _{r}f\right\| _{\HH^{\otimes
(2q-2r) }}^{2}\right] . \notag
\end{eqnarray}
Note that it is in principle much more difficult to deal with (\ref{SelfCont}), since it involves both symmetrized and non-symmetrized contractions.
\end{enumerate}
}
\end{remark}
{\it Proof of Theorem \ref{thm-pasmal}}.
Let us first show that the following formula (\ref{for})
is in order: for any $s\geq 2$,
\begin{eqnarray}
\Gamma_{s-1}(F)&=&
\sum_{r_1=1}^{q} \ldots\sum_{r_{s-1}=1}^{[(s-1)q-2r_1-\ldots-2r_{s-2}]\wedge q}c_q(r_1,\ldots,r_{s-1})
{\bf 1}_{\{r_1< q\}}
\ldots {\bf 1}_{\{
r_1+\ldots+r_{s-2}< \frac{(s-1)q}2
\}}\notag\\
&&
\hskip5cm\times I_{sq-2r_1-\ldots-2r_{s-1}}\big(
(...(f\widetilde{\otimes}_{r_1} f) \widetilde{\otimes}_{r_2} f)\ldots
f)\widetilde{\otimes}_{r_{s-1}}f
\big).\notag\\
\label{for}
\end{eqnarray}
We shall prove (\ref{for}) by induction, assuming without loss of generality that $\HH$ has the form
$L^{2}(\mathbb{A},\mathscr{A},\mu)$, where $(\mathbb{A},\mathscr{A})$ is
a measurable space and $\mu$ is a $\sigma$-finite measure without atoms.
When $s=2$, identity (\ref{for}) simply reads
\begin{equation}\label{gamma1}
\Gamma_{1}(F)=
\sum_{r=1}^{q} c_q(r)
I_{2q-2r}(
f\widetilde{\otimes}_{r} f).
\end{equation}
Let us prove (\ref{gamma1}) by means of the multiplication formula
(\ref{multiplication}) , see also \cite{NO} for similar computations.
We have
\begin{eqnarray*}
\Gamma_1(F)&=&\langle DF,-DL^{-1}F\rangle_\HH = \frac1q\|DF\|^2_\HH=q\int_\mathbb{A}
I_{q-1}\big(f(\cdot,a)\big)^2\mu(da)\\
&=&q\sum_{r=0}^{q-1}r!\binom{q-1}{r}^2 I_{2q-2-2r}\left(\int_\mathbb{A}f(\cdot,a)\widetilde{\otimes}_r f(\cdot,a)
\mu(da)\right)\\
&=&q\sum_{r=0}^{q-1}r!\binom{q-1}{r}^2 I_{2q-2-2r}\left(f\widetilde{\otimes}_{r+1} f\right)\\
&=&q\sum_{r=1}^{q}(r-1)!\binom{q-1}{r-1}^2 I_{2q-2r}\left(f\widetilde{\otimes}_{r} f\right),
\end{eqnarray*}
thus yielding (\ref{gamma1}).
Assume now that (\ref{for}) holds for $\Gamma_{s-1}(F)$, and let us prove that it continues to hold for $\Gamma_s(F)$.
We have, still using the multiplication formula (\ref{multiplication}) and proceeding
as above,
\begin{eqnarray*}
\Gamma_{s}(F)&=&\langle DF,-DL^{-1}\Gamma_{s-1}F\rangle_\HH\\
&=&\sum_{r_1=1}^{q} \ldots\sum_{r_{s-1}=1}^{[(s-1)q-2r_1-\ldots-2r_{s-2}]\wedge q}q
c_q(r_1,\ldots,r_{s-1})
{\bf 1}_{\{r_1< q\}}
\ldots {\bf 1}_{\{
r_1+\ldots+r_{s-2}< \frac{(s-1)q}2
\}}\notag\\
&&
\hskip1cm\times
{\bf 1}_{\{
r_1+\ldots+r_{s-1}< \frac{sq}2
\}} \big\langle I_{q-1}(f),I_{sq-2r_1-\ldots-2r_{s-1}-1}\big(
(...(f\widetilde{\otimes}_{r_1} f) \widetilde{\otimes}_{r_2} f)\ldots
f)\widetilde{\otimes}_{r_{s-1}}f
\big)\big\rangle_\HH\\
&=&\sum_{r_1=1}^{q} \ldots\sum_{r_{s-1}=1}^{[(s-1)q-2r_1-\ldots-2r_{s-2}]\wedge q}\,\,\,
\sum_{r_{s}=1}^{[sq-2r_1-\ldots-2r_{s-1}]\wedge q}
c_q(r_1,\ldots,r_{s-1})\times q
(r_s-1)!\\
&&\hskip1cm\times\binom{sq-2r_1-\ldots-2r_{s-1}-1}{r_s-1}\binom{q-1}{r_s-1}
{\bf 1}_{\{r_1< q\}}
\ldots {\bf 1}_{\{
r_1+\ldots+r_{s-2}< \frac{(s-1)q}2
\}}\notag\\
&&
\hskip1cm\times
{\bf 1}_{\{
r_1+\ldots+r_{s-1}< \frac{sq}2
\}} I_{(s+1)q-2r_1-\ldots-2r_{s}}\big(
(...(f\widetilde{\otimes}_{r_1} f) \widetilde{\otimes}_{r_2} f)\ldots
f)\widetilde{\otimes}_{r_{s}}f
\big),
\end{eqnarray*}
which is the desired formula for $\Gamma_s(F)$.
The proof of (\ref{for}) for all $s\geq 1$ is thus finished.
Now, let us take the expectation on both sides of (\ref{for}). We get
\begin{eqnarray*}
\kappa_{s}(F)&=&(s-1)!E[\Gamma_{s-1}(F)]\\
&=&(s-1)!
\sum_{r_1=1}^{q} \ldots\sum_{r_{s-1}=1}^{[(s-1)q-2r_1-\ldots-2r_{s-2}]\wedge q}c_q(r_1,\ldots,r_{s-1})
{\bf 1}_{\{r_1< q\}}
\ldots {\bf 1}_{\{
r_1+\ldots+r_{s-2}< \frac{(s-1)q}2
\}}
\\
&&
\hskip4cm\times {\bf 1}_{\{
r_1+\ldots+r_{s-1}= \frac{sq}2
\}}\times
(...(f\widetilde{\otimes}_{r_1} f) \widetilde{\otimes}_{r_2} f)\ldots
f)\widetilde{\otimes}_{r_{s-1}}f.
\end{eqnarray*}
Observe that, if $2r_1+\ldots+2r_{s-1}= sq$ and $r_{s-1}\leq
(s-1)q-2r_1-\ldots-2r_{s-2}$ then
$2r_{s-1}=q+(s-1)q-2r_1-\ldots-2r_{s-2}\geq q+r_{s-1}$, so that $r_{s-1}\geq q$.
Therefore,
\begin{eqnarray*}
\kappa_{s}(F)&=&(s-1)!
\sum_{r_1=1}^{q} \ldots\sum_{r_{s-2}=1}^{[(s-2)q-2r_1-\ldots-2r_{s-3}]\wedge q}c_q(r_1,\ldots,r_{s-2},q)
{\bf 1}_{\{r_1< q\}}
\ldots {\bf 1}_{\{
r_1+\ldots+r_{s-3}< \frac{(s-2)q}2
\}}
\\
&&
\hskip4cm\times {\bf 1}_{\{
r_1+\ldots+r_{s-2}= \frac{(s-2)q}2
\}}
\big\langle(...( f\widetilde{\otimes}_{r_1} f) \widetilde{\otimes}_{r_2} f)\ldots
f)\widetilde{\otimes}_{r_{s-2}}f,f\big\rangle_{\HH^{\otimes q}},
\end{eqnarray*}
which is the announced result,
since $c_q(r_1,\ldots,r_{s-2},q)=q!c_q(r_1,\ldots,r_{s-2})$.
\fin

\subsection{Combinatorial expression of cumulants}\label{SS : Diagrams}
We now provide a classic combinatorial representation of cumulants of the type $\kappa_s(F)$, in the case
where: (i) $s\geq 2$, (ii) $q\geq 2$, (iii) $sq$ is even,
(iv) $F=I_q(f)$ (with $f\in\HH^{\odot q}$) and (v) $\HH = L^2(\mathbb{A},\mathscr{A},\mu)$ is a non-atomic measure space. Assumptions (i)-(v) will be in order throughout this section. As explained e.g. in \cite{PecTaq_SURV}, one can equivalently express cumulants of chaotic random variables by using {\sl diagrams} or {\sl graphs}: here, we choose to adopt the (somewhat simpler) representation in terms of graphs. See \cite[Section 4]{PecTaq_SURV} for an explicit connection between graphs and cumulants; see \cite{MarPTRF} for some striking application of graph counting to the computation of cumulants of non-linear functionals of spherical Gaussian fields; see \cite{BrMa, ChaSlud, GiSu} for classic examples of how to use diagram enumeration to deduce CLTs for Gaussian subordinated fields.

\begin{definition}{\rm For $s\geq 2$, consider the set $[s] = \{1,...,s\}$ of the first $s$ integers. For
$q\geq 2$, we denote by $K(s,q)$ the class of all non-oriented graphs $\gamma$ over $[s]$ satisfying the
following properties:
\begin{itemize}
\item[-] $\gamma$ does not contain edges of the type $(j,j)$, $j=1,...,s$, that is, no edges of $\gamma$ connect a vertex with itself. One can equivalently say that $\gamma$ ``has no loops''.
\item[-] Multiple edges are allowed, that is, an edge can appear $k\geq 2$ times into $\gamma$; in this case, we say that $k$ is the {\sl multiplicity} of the edge. Also, if $i,j$ are connected by an edge of multiplicity $k$, we say that $i$ and $j$ {\sl are connected $k$ times}.
\item[-] Every vertex appears in exactly $q$ edges (counting multiplicities).
\item[-] $\gamma$ is connected.
\end{itemize}
If $sq$ is odd, then $K(s,q)$ is empty. If $sq$ is even, then each $\gamma\in K(s,q)$ contains exactly $sq/2$ edges (counting multiplicities). For instance: an element of $K(3,2)$ is $\gamma =\{(1,2),(2,3),(3,1)\}$; an element of $K(3,4)$ is $\gamma =\{(1,2),(1,2),(2,3),(2,3),(1,3),(1,3)\}$ (note that each edge has multiplicity 2).
}
\end{definition}

\begin{definition} {\rm Given $q,s\geq 2$ such that $sq$ is even, and $\gamma\in K(s,q)$, we define the constant $w(\gamma)$ as follows.
\begin{itemize}
\item[(a)] For every $j=1,...,s$, consider a generic vector $L(j) = (l(j,1),...,l(j,q))$ of $q$ distinct objects. Write $\{L(j)\}$ for the set of the components of $L(j)$.
\item[(b)] Starting from $\gamma$, build a matching $m(\gamma)$ over $L := \bigcup_{j=1,...,s} \{L(j)\} $ as follows. Enumerate the vertices $v_1,...,v_{sq/2}$ of $\gamma$. If $v_1$ links $i_1$ and $j_1$ and has multiplicity $k_1$, then pick $k_1$ elements of $L(i_1)$ and $k_1$ elements of $L(j_1)$ and build a matching between the two $k_1$-sets. If $v_2$ links $i_2$ and $j_2$ and has multiplicity $k_2$, then pick $k_2$ elements of $L(i_2)$ (among those not already chosen at the previous step, whenever $i_2$ equals $i_1$ or $j_1$) and $k_2$ elements of $L(j_2)$ (among those not already chosen at the previous step, whenever $j_2$ equals $i_1$ or $j_1$) and build a matching between the two $k_2$-sets. Repeat the operation up to the step $sq/2$.
\item[(c)] Define $\EuFrak{S}_q$ as the group of all permutations of $[q]$. For every $\sigma\in \EuFrak{S}_q$, define the vector $L_\sigma(j) = (l_\sigma(j,1 ),...,l_\sigma(j,q))$, where $l_\sigma(j,p) = l(j,\sigma(p))$, $p=1,...,q$.
\item[(d)] Define $\EuFrak{S}^{(s)}_q$ as the $s$th product group of $\EuFrak{S}_q$, that is, $\EuFrak{S}^{(s)}_q$ is the collection of all $s$-vectors of the type ${\bf \sigma} = (\sigma_1,...,\sigma_s)$, where $\sigma_j \in \EuFrak{S}_q$, $j=1,...,s$, endowed with the usual product group structure.
\item[(e)] For every ${\bf \sigma}=(\sigma_1,...,\sigma_s) \in \EuFrak{S}^{(s)}_q$,  build a new matching $m_{\bf \sigma}(\gamma)$ over $L$ by repeating the same operation as at point (b), with the vector $L_{\sigma_j}(j) $ replacing $L(j)$ for every $j=1,...,s$.
\item[(f)] Define an equivalence relation over $\EuFrak{S}^{(s)}_q$ by writing ${\bf \sigma } \sim_\gamma {\bf \pi }$ whenever $m_{\bf \sigma}(\gamma) = m_{\bf \pi}(\gamma)$. Let $\EuFrak{S}^{(s)}_q/\sim_\gamma$ be the quotient of $\EuFrak{S}^{(s)}_q$ with respect to $\sim_\gamma$.
\item[(g)] Define $w(\gamma)$ to be the cardinality of $\EuFrak{S}^{(s)}_q/\sim_\gamma$.
\end{itemize}
For instance, one can prove that $w(\gamma) = 2^{s-1}$ for every $s\geq 2$ and every $\gamma\in K(s,2)$. Also, $w(\gamma) = q!$ for every $q\geq 2$ and every $\gamma \in K(2,q)$.
}
\end{definition}

\begin{definition}{\rm For $q\geq 2$, let $f\in\HH^{\odot q}$, that is, $f$ is a symmetric element of $L^2(\mathbb{A}^q,\mathscr{A}^q,\mu^q)$. Fix $\gamma \in K(s,q)$, where $s\geq 2$ and $sq$ is even. Starting from $f$ and $\gamma$, we define a function
\[
(a_1,...,a_{sq/2}) \mapsto f_\gamma(a_1,...,a_{sq/2}),
\]
of $sq/2$ variables, as follows:
\begin{itemize}
\item[(i)] juxtapose $s$ copies of $f$, and
\item[(ii)] if the vertices $j$ and $l$ are linked by $r$ edges, then identify $r$ variables in the argument of the $j$th copy of $f$ with $r$ variables in the argument of the $l$th copy. By symmetry, the position of the identified $r$ variables is immaterial. Also, by connectedness, one has necessarily $r<q$.
\end{itemize}
The resulting function $f_\gamma$ is a (not necessarily symmetric) element of \[
L^1(\mathbb{A}^{(sq/2)},\mathscr{A}^{(sq/2)},\mu^{(sq/2)}).
\]
For instance, if $\gamma =\{(1,2),(2,3),(3,1)\}\in K(3,2)$, then $f_\gamma (x,y,z) = f(x,y)f(y,z)f(z,x)$. If $\gamma =\{(1,2),(1,2),(2,3),(2,3),(1,3),(1,3)\}\in K(3,4)$, then
\[
f_\gamma (t,u,v,x,y,z) = f(t,u,v,x)f(t,u,y,z)f(y,z,v,x).
\]
}
\end{definition}

\medskip

We now turn to the main statement of this section, relating graphs and cumulants. The first part is classic (see e.g. \cite{PecTaq_SURV} for a proof), and shows how to use the functions $f_\gamma$ to compute the cumulants
of the random variable $F = I_q(f)$. The second part of the statement makes use of (\ref{formula-cumulants}), and shows indeed that Theorem \ref{thm-pasmal} implicitly provides a compact representation of well-known combinatorial expressions.

\begin{prop}
Let $q\geq 2$ and assume that $F=I_q(f)$, where $f\in\HH^{\odot q}$. Assume the integer $s\geq 2$ is such that $sq$ is even.
Then,
\begin{equation}\label{diagramform}
\kappa_s(F) = \sum_{\gamma\in K(s,q)}w(\gamma) \int_{\mathbb{A}^{(sq/2)}}f_\gamma(a_1,...,a_{sq/2})\mu(da_1)\cdot\cdot\cdot\mu(da_{sq/2}).
\end{equation}
As a consequence, by using (\ref{formula-cumulants}), one deduces the identity
\begin{eqnarray}
&&(s-1)!q!\sum c_q(r_1,\ldots,r_{s-2})\big\langle (...(f\widetilde{\otimes}_{r_1} f) \widetilde{\otimes}_{r_2} f)
\ldots
f)\widetilde{\otimes}_{r_{s-2}}f,f\big\rangle_{\HH^{\otimes q}} \\
&& = \sum_{\gamma\in K(s,q)}w(\gamma) \int_{\mathbb{A}^{(sq/2)}}f_\gamma(a_1,...,a_{sq/2})\mu(da_1)\cdot\cdot\cdot\mu(da_{sq/2}).
\end{eqnarray}
where $\sum$ runs over all collections of integers $r_1,\ldots,r_{s-2}$ verifying the conditions pinpointed in the statement of Theorem \ref{thm-pasmal}.
\end{prop}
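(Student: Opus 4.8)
The plan is to prove the two displayed equalities in turn. Equality (\ref{diagramform}) is the classical diagram representation of the cumulants of a multiple Wiener--It\^o integral, and for it I would simply follow the argument of \cite[Section 4]{PecTaq_SURV}; the second equality then requires no work at all, being obtained by matching (\ref{diagramform}) against the expression for $\kappa_s(F)$ supplied by Theorem \ref{thm-pasmal}.

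In more detail, for (\ref{diagramform}) I would first recall that $\kappa_s(F)$ equals the $s$-fold joint cumulant $\kappa(F,\dots,F)$. Representing each of the $s$ arguments as the multiple integral $I_q(f)$ and expanding the moments of $F$ by repeated use of the multiplication formula (\ref{multiplication}) (equivalently, by the standard product formula for multiple integrals), one obtains an expansion indexed by the non-flat perfect matchings of the $sq$ ``labels'' $L=\bigcup_{j=1}^s\{L(j)\}$, each contributing the integral of the corresponding partial contraction of $s$ copies of $f$. Passing from moments to cumulants via the moment--cumulant relations eliminates every matching whose associated quotient graph on $[s]$ --- obtained by collapsing each block $\{L(j)\}$ to the vertex $j$ --- fails to be connected; the surviving quotient graphs are exactly the loopless, connected, $q$-regular multigraphs, i.e. the elements of $K(s,q)$. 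For a fixed $\gamma\in K(s,q)$ every matching projecting onto $\gamma$ produces the same integral $\int_{\mathbb{A}^{sq/2}}f_\gamma\,d\mu^{\otimes sq/2}$ --- this is precisely where the symmetry of $f\in\HH^{\odot q}$ is used, guaranteeing that $f_\gamma$ does not depend on which $r$ variables are identified at an edge of multiplicity $r$ --- while the number of distinct matchings projecting onto $\gamma$ is, by the very construction in the definition of $w(\gamma)$, the cardinality of $\EuFrak{S}_q^{(s)}/\!\sim_\gamma$, namely $w(\gamma)$. Summing over $\gamma\in K(s,q)$ yields (\ref{diagramform}).

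For the second equality there is nothing more to do: the left-hand side is $\kappa_s(F)$ by Theorem \ref{thm-pasmal} (formula (\ref{formula-cumulants})), the right-hand side is $\kappa_s(F)$ by (\ref{diagramform}), hence the two coincide for all $q\geq 2$ and all $s\geq 2$ with $sq$ even.

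I expect the only genuinely delicate point to be the combinatorial bookkeeping inside the proof of (\ref{diagramform}): checking that the assignment $\gamma\mapsto f_\gamma$ is well defined, and --- crucially --- that the number of non-flat matchings of $L$ with quotient graph $\gamma$ is exactly $|\EuFrak{S}_q^{(s)}/\!\sim_\gamma|$, so that the artificial labelling of the $q$ arguments of each copy of $f$ is compensated without over- or under-counting. Since this is the content of \cite[Section 4]{PecTaq_SURV}, I would keep the write-up brief. I would deliberately avoid trying to prove the second equality ``by hand'', i.e. by directly matching the recursion for the constants $c_q(r_1,\dots,r_a)$ against a recursion for $\sum_{\gamma\in K(s,q)}w(\gamma)\int f_\gamma$; this non-trivial identity between contraction sequences $(r_1,\dots,r_{s-2})$ and graphs of $K(s,q)$ is exactly what comes out for free from having two independent evaluations of the same cumulant, and reproving it combinatorially would defeat the purpose.
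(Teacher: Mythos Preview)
Your proposal is correct and matches the paper's approach exactly: the paper does not prove this proposition at all, simply stating that the first part (\ref{diagramform}) is classical with a reference to \cite{PecTaq_SURV}, and that the second identity follows by combining (\ref{diagramform}) with formula (\ref{formula-cumulants}) of Theorem \ref{thm-pasmal}. Your sketch of the diagram formula argument is more detailed than anything in the paper, but it is the standard one and your reading of the second identity as an immediate corollary is precisely what the authors intend.
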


\begin{rem}{\rm
Being based on a sum over the whole set $K(s,q)$, formula (\ref{diagramform}) is of course more compact than (\ref{formula-cumulants}). However, since it does not contain any hint about how one should enumerate the elements of $K(s,q)$, expression (\ref{diagramform}) is much harder to compute and asses. On the other hand, (\ref{formula-cumulants}) is based on recursive relations and inner products of contractions, so that one could in principle compute $\kappa_s(F)$ by implementing an explicit algorithm.
}
\end{rem}

\subsection{CLTs on Wiener chaos}
We conclude the paper by providing a new proof (based on our new
formula (\ref{formula-cumulants})) of the following result, first proved in \cite{nunugio} and
yielding a necessary and sufficient condition for CLTs on a fixed chaos.
\begin{thm}[See \cite{nunugio}]\label{thm-nupec}
Fix an integer $q\geq 2$, and let $(F_n)_{n\geq 1}$ be a sequence of the form $F_n=I_q(f_n)$,
with $f_n\in\HH^{\odot q}$ such that
$E[F_n^2]=q!\|f_n\|^2_{\HH^{\otimes q}}=1$ for all $n\geq 1$.
Then, as $n\to\infty$, we have $F_n\to N(0,1)$ in law if and only if
$\|f_n\widetilde{\otimes}_r f_n\|_{\HH^{\otimes (2q-2r)}}\to 0$
for all $r=1,\ldots,q-1$.
\end{thm}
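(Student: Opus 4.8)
The plan is to derive the equivalence entirely from the cumulant formula (\ref{formula-cumulants}), using the classical fact that convergence in law towards $N(0,1)$ for a sequence living in a fixed chaos is equivalent to the convergence of all cumulants: $\kappa_1(F_n)\to 0$, $\kappa_2(F_n)\to 1$, and $\kappa_s(F_n)\to 0$ for every $s\geq 3$. (This follows from hypercontractivity on a fixed Wiener chaos, which guarantees that the laws of $F_n$ are tight and that all moments converge along subsequences; since $F_n=I_q(f_n)$ has $\kappa_1=0$ and $\kappa_2=1$ by assumption, the issue is only the higher cumulants.) So the theorem reduces to showing that the condition $\|f_n\widetilde{\otimes}_r f_n\|_{\HH^{\otimes(2q-2r)}}\to 0$ for all $r=1,\ldots,q-1$ is equivalent to $\kappa_s(F_n)\to 0$ for all $s\geq 3$.

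First I would treat the implication "contractions vanish $\Rightarrow$ all higher cumulants vanish". Fix $s\geq 3$ with $sq$ even (otherwise $\kappa_s(F_n)=0$ identically). Looking at (\ref{formula-cumulants}), each summand is a constant times an inner product of the form $\big\langle (\cdots(f_n\widetilde{\otimes}_{r_1}f_n)\cdots)\widetilde{\otimes}_{r_{s-2}}f_n,\,f_n\big\rangle_{\HH^{\otimes q}}$. The strategy is: the constraints $(i)$--$(iv)$ force at least one of the contraction indices to lie strictly between $0$ and $q$. Indeed, $(ii)$ says $r_1+\cdots+r_{s-2}=(s-2)q/2$ while $(i)$ forces each $r_i\leq q$; if all $r_i$ equalled $q$ we would need $(s-2)q = (s-2)q/2$, impossible for $s\geq 3$, and $r_i=q$ is in any case excluded at the first step by $(iii)$ ($r_1<q$). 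So $r_1\in\{1,\ldots,q-1\}$ always. Then I would bound the inner product: repeatedly apply Cauchy--Schwarz together with the elementary estimate $\|g\widetilde{\otimes}_r f_n\|\leq \|g\otimes_r f_n\|\leq \|g\|\,\|f_n\|$ (the $L^2$-norms being uniformly bounded since $q!\|f_n\|^2=1$), and peel off the factor $\|f_n\widetilde{\otimes}_{r_1}f_n\|_{\HH^{\otimes(2q-2r_1)}}$ — more precisely, one writes the whole iterated contraction as an inner product in which $f_n\widetilde{\otimes}_{r_1}f_n$ (or, after rearranging, $f_n\otimes_{r_1}f_n$) appears as one factor and everything else has bounded norm, so the absolute value is $\leq C_{s,q}\,\max_{1\leq r\leq q-1}\|f_n\widetilde{\otimes}_r f_n\|\to 0$. (Here one uses that $\|f_n\widetilde{\otimes}_r f_n\|\to 0$ for one $r$ together with boundedness does not immediately control $\|f_n\otimes_r f_n\|$, but the standard inequalities $\|f_n\widetilde{\otimes}_r f_n\|^2\leq \|f_n\otimes_r f_n\|^2$ and $\|f_n\otimes_r f_n\|^2\leq \langle f_n\widetilde{\otimes}_{q-r}f_n,\ldots\rangle$-type relations from \cite{nunugio} let one pass between the two; alternatively one just phrases the whole bound using symmetrized contractions throughout). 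Hence $\kappa_s(F_n)\to 0$ for every $s\geq 3$, giving $F_n\to N(0,1)$.

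For the converse, "$F_n\to N(0,1)\Rightarrow$ contractions vanish", I would use the fourth cumulant. Convergence in law plus moment convergence (again by hypercontractivity) gives $\kappa_4(F_n)\to \kappa_4(N(0,1))=0$. Now invoke formula (\ref{bob}) from the Remark: $\kappa_4(I_q(f_n))=\frac{3}{q}\sum_{r=1}^{q-1} r\,r!^2\binom{q}{r}^4 (2q-2r)!\,\|f_n\widetilde{\otimes}_r f_n\|^2_{\HH^{\otimes(2q-2r)}}$. Every coefficient in this sum is strictly positive, and the summands are nonnegative, so $\kappa_4(F_n)\to 0$ forces $\|f_n\widetilde{\otimes}_r f_n\|_{\HH^{\otimes(2q-2r)}}\to 0$ for each $r=1,\ldots,q-1$ separately. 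This closes the equivalence.

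The main obstacle I anticipate is the bookkeeping in the forward direction: making the Cauchy--Schwarz peeling argument on the iterated contraction $\big\langle(\cdots(f_n\widetilde{\otimes}_{r_1}f_n)\cdots)\widetilde{\otimes}_{r_{s-2}}f_n,f_n\big\rangle$ genuinely rigorous, i.e. checking that the constants $c_q(r_1,\ldots,r_{s-2})$ and the number of terms are bounded uniformly in $n$ (they are, since they depend only on $s$ and $q$, not on $n$) and that one can always isolate a symmetrized contraction of order in $\{1,\ldots,q-1\}$ as a Hilbert-space factor with the rest of bounded norm. The slightly delicate point is that $\widetilde{\otimes}_{r}$ does not play as cleanly with inner products as $\otimes_r$ does, so I would either systematically replace symmetrized contractions by non-symmetrized ones (legitimate inside the final norm estimate because $\|\,\widetilde{\otimes}\,\|\leq\|\,\otimes\,\|$) or quote the contraction inequalities already in the literature (\cite{nunugio, NO}). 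Everything else is either a direct appeal to (\ref{formula-cumulants}), to (\ref{bob}), or to the standard method-of-moments/hypercontractivity argument on a fixed chaos.
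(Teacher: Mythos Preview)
Your proposal is correct and follows essentially the same route as the paper: the forward direction bounds each summand in (\ref{formula-cumulants}) via Cauchy--Schwarz and the chain $\|g\widetilde{\otimes}_r h\|\leq\|g\otimes_r h\|\leq\|g\|\,\|h\|$ to isolate $\|f_n\widetilde{\otimes}_{r_1}f_n\|$ (with $r_1<q$ forced by $(iii)$), then invokes the method of moments; the converse uses hypercontractivity to get $\kappa_4(F_n)\to 0$ and reads off the vanishing of all symmetrized contractions from (\ref{bob}). The only simplification the paper makes over your write-up is that the ``peeling'' is done directly on symmetrized contractions via $\|g\widetilde{\otimes}_r h\|\leq\|g\|\,\|h\|$, so the worry you raise about passing between $\widetilde{\otimes}_r$ and $\otimes_r$ never arises.
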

{\it Proof}. Observe that $\kappa_1(F_n)=0$ and $\kappa_2(F_n)=1$.
For
$\kappa_s(F_n)$, $s\geq 3$, we consider the expression (\ref{formula-cumulants}).
Let $r_1,\ldots,r_{s-2}$ be some integers such that
$(i)$--$(iv)$ in Theorem \ref{thm-pasmal} are satisfied.
Using Cauchy-Schwarz inequality and then successively
$\|g\widetilde{\otimes}_r h\|_{\HH^{\otimes (p+q-2r)}}\leq
\|g\otimes_r h\|_{\HH^{\otimes (p+q-2r)}}\leq
\|g\|_{\HH^{\otimes p}}\|h\|_{\HH^{\otimes q}}$
whenever $g\in\HH^{\odot p}$, $h\in\HH^{\odot q}$
and $r=1,\ldots,p\wedge q$, we get
that
\begin{eqnarray}
&&\big|\langle (...(f_n\widetilde{\otimes}_{r_1} f_n) \widetilde{\otimes}_{r_2} f_n)\ldots
f_n)\widetilde{\otimes}_{r_{s-2}}f_n,f_n\rangle_{\HH^{\otimes q}}\big|\notag\\
&&\quad\quad\quad\quad\quad\quad\quad\quad\quad \quad\quad\quad\leq
\| (...(f_n\widetilde{\otimes}_{r_1} f_n) \widetilde{\otimes}_{r_2} f_n)\ldots
f_n)\widetilde{\otimes}_{r_{s-2}}f_n\|_{\HH^{\otimes q}}\|f_n\|_{\HH^{\otimes q}}\notag\\
&& \quad\quad\quad\quad\quad\quad\quad\quad\quad\quad\quad\quad  \leq\|f_n\widetilde{\otimes}_{r_1} f_n\|_{\HH^{\otimes (2q-2r_1)}}
\|f_n\|_{\HH^{\otimes q}}^{s-2}\notag\\
&& \quad\quad\quad \quad\quad\quad\quad\quad\quad \quad\quad\quad =(q!)^{1-\frac{s}2}\,\|f_n\widetilde{\otimes}_{r_1} f_n\|_{\HH^{\otimes (2q-2r_1)}}.
\label{majoration}
\end{eqnarray}
Assume now that $\|f_n\widetilde{\otimes}_r f_n\|_{\HH^{\otimes (2q-2r)}}\to 0$
for all $r=1,\ldots,q-1$, and fix an integer $s\geq 3$. By combining (\ref{formula-cumulants})
with (\ref{majoration}), we get that
$\kappa_s(F_n)\to 0$ as $n\to\infty$. Hence, applying the method of moments or cumulants, we get
that $F_n\to N(0,1)$ in law. Conversely, assume that $F_n\to N\sim N(0,1)$ in law.
Since the sequence $(F_n)$ lives inside the $q$th chaos, and because $E[F_n^2]=1$ for all $n$,
we have that, for every $p\geq 1$, $\sup_{n\geq 1}E[|F_n|^p]<\infty$
(see e.g. Janson \cite[Ch. V]{Janson}).
This implies immediately that $\kappa_4(F_n)=E[F_n^4]-3\to E[N^4]-3=0$. Hence,
identity (\ref{bob}) allows to conclude
that $\|f_n\widetilde{\otimes}_r f_n\|_{\HH^{\otimes (2q-2r)}}\to 0$ for all $r=1,\ldots,q-1$.
\fin

\end{document}